\documentclass[12 pt]{article}
\usepackage{graphicx} 
\usepackage{times}\usepackage{setspace}
\usepackage{graphicx}
\usepackage{float}
\usepackage{caption}
\usepackage[caption = false]{subfig}
\usepackage{hyperref}
\usepackage{natbib}
\usepackage{amssymb}
\usepackage{amsthm}
\newtheorem{dfn}{Definition}

\usepackage[english]{babel}

\newtheorem{theorem}{Theorem}
\newtheorem{lemma}[theorem]{Lemma}
\usepackage{enumerate}
\usepackage{nccmath}
\usepackage{setspace}
\usepackage{amsmath}
\usepackage{blindtext, multicol}
\usepackage{multicol}
\usepackage{mathrsfs}
\usepackage[autostyle, english = american]{csquotes}
\MakeOuterQuote{"}
\newtheorem{Lemma}{LEMMA}[section]
\newtheorem{Proposition}[Lemma]{Proposition}
\def\bp{\begin{Proposition}}
\def\ep{\end{Proposition}}
\newtheorem{Corollary}[theorem]{Corollary}

\allowdisplaybreaks
\makeatletter
\renewcommand*{\@fnsymbol}[1]{\@arabic{#1}}
\makeatother

	\newcommand{\n}[1]{\lVert#1\rVert} 
	\newcommand{\p}[1]{{#1}_{\rho}} 
	\providecommand{\keywords}[1]
	{
\textbf{Keywords: } #1
}

\title{\Large Regularization of Statistical Inverse Problems on Non-Reflexive Banach Spaces  }
\author{\small Darrel K Joseph\thanks{The author was supported through the Junior Research Fellowship by the University Grants Commission (UGC),
		India-NTA Ref. no.: 221610060111.\\Indian Institute of Science Education and Research, Thiruvananthapuram, India. \texttt{darreljoseph23@iisertvm.ac.in}}
	$~$ and $~$ M P Rajan\thanks{Indian Institute of Science Education and Research, Thiruvananthapuram, India. \texttt{rajanmp@iisertvm.ac.in}.}}
\date{}
\begin{document}
	
	\maketitle
	\noindent\textbf{Abstract} Inverse learning within a statistical framework has a wide range of applications. It has garnered significant attention in machine learning, artificial intelligence, and related fields, where the goal is to infer unknown parameters from indirect and noisy observations. This work investigates the stable approximation of $u^{\dagger}$ which solves the equation $Au=g$, with $A$ being a linear operator between appropriate vector spaces. We will consider the domain to be a non-reflexive Banach Space and the co-domain to be a space of real-valued functions on a metric space $X$. The  function $g$ is characterized by a finite number of independently and identically distributed data points, which are assumed to follow some unknown probability measure $\rho$. We employ Tikhonov regularization with an arbitrary convex functional to obtain the regularized solution corresponding to the given data point. The convergence analysis is carried out with respect to the Bregman distance, and an upper bound for the error is derived in probability terms. The theoretical findings are then supported by numerical experiments.

	\noindent \keywords{Reproducing kernel Banach space $\cdot$ Inverse problem $\cdot$ Statistical learning $\cdot$ Convex analysis $\cdot$ Regularization}

	\section{Introduction}
	
    In the literature of inverse problems, we are usually concerned with the equations of the form $Ax=y$, where $A:B_1\to B_2$ is a bounded linear operator between appropriate Banach spaces. Given a fixed element $y\in B_2$, we are required to find $x^{\dagger}\in B_1$ such that $Ax^{\dagger}=y$. In many practical scenarios, the exact element $y$ is unknown and only a perturbed data $y^{\delta}$ is known, with $\|y^{\delta}-y\|_{B_2}\leq\delta$, for some $\delta>0$. Hence, we seek the solution $x^{\delta}$ of the equation $Ax=y^{\delta}$. 
    
    Naturally, we want the perturbed solution $x^{\delta}$ to converge to $x^{\dagger}$ as $\delta\to0$. However, if our operator $A$ is ill-posed, that is, does not have a continuous inverse, then the solution $x^{\delta}$ may fail to exist, or may not converge to $x^{\dagger}$ at all. In such cases, we apply different regularization strategies to obtain a regularized solution. In simple terms, we replace the ill-posed equation with a well-posed one to obtain a regularized solution. Let $x_{\lambda}^{\delta}$ be the regularized solution corresponding to $y^{\delta}$, with $\lambda$ being a regularization parameter. We then usually choose $\lambda$ depending on $\delta$ such that
    \begin{equation*}
    	\|x_{\lambda}^{\delta}-x^{\dagger}\|_{B_1}\to 0~~~~\text{as}~~~~\delta,\lambda(\delta)\to0
    \end{equation*}
    The above problem is for deterministic cases. This has been well studied during the late twentieth and early twenty-first century, and
    the prominent work can be seen in
    \cite{engl2000regularization,nair2009linear,schuster2012regularization}.\\
    
    Statistical inverse problems, particularly inverse learning, are of growing interest in this field. Unlike the deterministic case, the error is modeled as a random variable that follows some unknown probability distribution. This ensures that the regularized solution will also take the form of a random variable and must follow a certain distribution. As a result, the convergence and the convergence rate are obtained in terms of expectations or probability. This probabilistic and principled framework for
    integrating data and prior knowledge makes statistical inverse
    problems particularly powerful in modern, data-driven applications
    where noise and model uncertainty are significant concerns.
    Several recent works have addressed these challenges (see
    \cite{BGM, BAUER200752, Bubba2025O, Learningrate}).\\
    
    In statistical inverse problems, we often consider a linear operator $A$ from a real Banach space $B_1$ to a vector space $V$ of real-valued functions on a metric space $X$. Let $z_i=\{x_i,y_i\},1\leq i\leq m$, be a set of identically and independently distributed (i.i.d.) data points in $(X\times [-M,M])$, where $M>0$, sampled from an unknown distribution $\rho(x,y)$. The aim is to approximate the element $u_{\rho}\in B_1$ such that $u_{\rho}$ is the minimizer of the risk functional
    \begin{equation*}
    	\int_{X\times[-M,M]}|y-(Au)(x)|^p ~d\rho,~p>1
    \end{equation*}
    over all $u\in B_1$. When $B_1$ is a Hilbert space, an optimal convergence rate is obtained in \cite{BGM} for $p=2$. In \cite{Learningrate}, the study is carried out for $p=2$ in a Banach space setting for the special case $A=I$, which is called the direct problem. In our previous work \cite{JOSEPH2026102050}, we extended this framework to the inverse problem setting by introducing a more general operator $A$, and hence providing a unified approach to both the direct and inverse problems. But in both \cite{Learningrate} and \cite{JOSEPH2026102050}, the domain space $B_1$ considered is a $q$-uniform convex Banach space \eqref{qcon}. Since uniformly convex spaces are reflexive, the obtained convergence rates are not applicable to non-reflexive spaces like the $l^1$, the space of absolutely summable real sequences. In this paper, we extend the existing framework to the case where the underlying domain is an arbitrary Banach space, thereby providing a more general and widely applicable approach. This is reinforced by utilizing a more general convex function as the penalty function instead of the previous norm power functional. This allows us to treat inverse problems in non-reflexive spaces that arise in sparse reconstruction, signal processing, and other applications. Consequently, the given framework will significantly expand the range of applicability of convergence rate results. The convergence and the convergence rates are given in terms of the Bregman distance \ref{breg}. Since the regularization functional is generally nonsmooth, and as the underlying space does not possess a Hilbert structure, norm estimates are often not the appropriate error measure. The Bregman distance provides a natural notion of error adapted to the convex penalty and is therefore the standard quantity in which convergence rates are established. Many such convergences can be seen in \cite{Hofmann2007,Bubba2025O,schuster2012regularization}.\\

    Further, we show that the theorems in \cite{JOSEPH2026102050} are special cases of the results established in this paper. Note that the analysis and convergence rates obtained therein rely on the assumption that the unit ball is compactly embedded into $C(X)$ with covering numbers satisfying a logarithmic growth condition. This makes the application rather restrictive. We are now able to remove this assumption and consequently obtain a faster convergence rate than before. Finally, the Reproducing Kernel Banach Space (RKBS) assumption used in the previous work was adopted from \cite{5179093}. This definition is based on the reflexivity of the domain space with additional assumptions on its dual space. Moreover, the Kernel function must satisfy certain continuity property to establish the compactness of the unit ball, as stated earlier. However, in this, we use a more generalized version of RKBS from \cite{Xu2023SparseBanach}, which does not require those properties. This allowed us to consider more general non-reflexive spaces with fewer assumptions without requiring a Kernel function. Also, I would like to emphasize that \cite{JMLR:v22:20-751} and \cite{MR4749129} have used the more recent definition of RKBS to establish the representer theorem, and solutions to various inverse problems independent of the reflexivity of the space. The key difference between our paper and theirs is that they have considered a functional analytic approach, whereas we use statistical approach to determine a high probability convergence.\\
    
   The notion of convergence is expressed in terms of the number of data points $m$, where the error measure should ideally converge to zero as $m\to\infty$. For that a Tikhonov-type regularization \cite{Tikhonov:1963} method is employed. Since the learning problem is based on an $L^p$-type loss $(p>1)$, the RKBS framework provides a suitable setting where point evaluations are continuous, thereby guaranteeing well-definedness and ensuring that the optimization is meaningful.\\
	
	\noindent The paper is organized as follows: Section \ref{sec2}
	introduces the problem setting, basic definitions, and necessary
	assumptions. Section \ref{sec3}
	presents the main theoretical results and corollaries. In Section \ref{proofs}, the proofs establishing
	convergence and convergence rates are given. Finally, Section \ref{sec5}
	provides a numerical example to demonstrate and validate the
	theoretical findings.

\section{Problem setting}
\label{sec2}
\noindent In this paper, we consider an injective bounded linear operator
$A$ from a real Banach space $(B_1,\parallel\cdot\parallel_{B_1})$ to
a vector space $V$ of real-valued functions over some metric space $X$, i.e
\[A:B_1\to V\subseteq \mathcal{F}(X,\mathbb{R}).\]
Then our problem will be modeled as
\[\qquad Au=g,\qquad u\in B_1~\text{and}~ g\in V,\] where
$g$ is observed only through independently and identically
distributed noisy data points $z=\{z_i\}_{i=1}^m=\{(x_i,y_i)\}_{i=1}^m\in Z^m=(X,[-M,M])^m, M>0$
that are assumed to follow an unknown Borel probability distribution
$\rho(x,y).$
The distribution will be considered to be of the form $\rho(x,y)=\rho(y|x)\rho_X(x)$,
where $\rho(y|x)$ is the conditional distribution of $y$ for some given $x$ and $\rho_X(x)$ is the marginal distribution.
\vspace{0.2cm}

\noindent For some $p>1$, we assume that there exist a minimizer $u_{\rho}\in B_1$ corresponding to the measure $\rho(x,y)$ on $Z=(X\times [-M,M])$ such that
\begin{align}\label{true}
	u_{\rho}:&=\arg \min _{u\in B_1} \int_{Z}|y-(Au)(x)|^{p}~ d \rho.
\end{align}
Further we denote
\begin{equation}\label{err}
	\mathcal{E_{\rho}}(u):= \int_{Z}|y-(Au)(x)|^{p}~ d \rho.
\end{equation}

\vspace{0.3cm}

\noindent We define the regularized solution $u_z^{\lambda}$ corresponding to the data points $z$ as

\begin{equation}\label{reg}
	u_z^{\lambda}=\text{arg}\underset{u\in B_1}{\text{min}}\left(\frac{1}{m}\sum_{i=1}^m|y_i-(Au)(x_i)|^p+\lambda\Omega(u)\right)
\end{equation}
where $\Omega$ is a continuous convex function and $\lambda>0$ is the regularization parameter. Note that we only used the vector space structure of the co-domain.

\noindent We go on to further define the $\rho$ induced regularized solution  corresponding to $\lambda$ and \eqref{true} by,
\begin{equation}\label{regtrue}
	u^{\lambda}_{\rho}:= \arg\min_{u\in B_1}\left(\mathcal{E}_{\rho}(u)+\lambda\Omega(u)\right)
\end{equation}
The existence of the above minimizers follows from the assumptions \ref{assume} (cf. \cite{BenesovaKruzik2017}, \cite{schuster2012regularization}). 

    \subsection{Definitions and basic results}
In this subsection, we give an overview of some of the basic  definitions that we used for analysis purposes from convex analysis and related literature.

\begin{dfn} Subdifferential\cite{rockafellar1970convex}:
	Let $G:B\rightarrow\mathbb{R}$ be a convex function defined on a
	Banach space $B$. The subdifferential $\partial G$ of the function $G$ at the
	point $u\in B$ is characterized as the set
    \begin{align}\label{subd}
	(\partial G)(u)&=\{ u^* \in {B}^{*}: G\left(v\right)-G(u)
	\left.\geq\left\langle v-u,u^* \right\rangle_{B\times B^*}, \forall v \in {B}\right\}\\
	& \neq \phi .\notag
\end{align}
\end{dfn}
Each element of the above set is called a subgradient. Note that here $(\partial G)(u)$ represents a set of linear functionals, and each element $\xi(u)\in(\partial G)(u)$ is a linear functional corresponding to the point $u$. That is for all $u$, $\xi(u)\in B^*$ and not to be read as $\xi$ acting on the point $u$.

\begin{dfn} Bregman distance\cite{Bregman1967}:
	Let $B$ be a Banach space, and $G$ be a real convex function on $B$. If $u$ and $v$ are arbitrary elements of $B$, then the Bregman distance from $v$ to $u$ corresponding to $G$ with respect to some $\xi(u)\in(\partial G)(u)$ is defined as 
	\begin{equation}\label{breg}
		D_G^{\xi(u)}(v,u)=G(v)-G(u)-\langle v-u,\xi(u)\rangle_{B\times B^*}
	\end{equation}
\end{dfn}
We can easily infer from \eqref{subd} and \eqref{breg} that the Bregman distance is always non negative. Further, for any subgradients $\eta(v)\in(\partial G)(v)$ and $\pi(u)\in(\partial G)(u)$, it follows that

\begin{equation*}
 D_{G}^{\eta(v)}(u,v)+D_{G}^{\pi(u)}(v,u)=\langle u-v,\pi(u)-\eta(v)\rangle_{B\times B^*}.
\end{equation*}
The non-negativity of the Bregman distance then ensures that 
\begin{equation}\label{ineq}
D_{G}^{\eta(v)}(u,v)\leq \|u-v\|_{B}\|\eta(v)-\pi(u)\|_{B^*}.
\end{equation}

\begin{dfn}\label{RKBS}{Reproducing Kernel Banach Space}\cite{Xu2023SparseBanach}:
A Banach space $B$ of functions defined on a prescribed set $X$ is called an RKBS if point-evaluation functionals
$\delta_x$, for all $x\in X$, are continuous on $B$, that is, for each $x\in X$ there exists a constant $c_x > 0$ such that
\begin{equation*}
	|\delta_x(f)|\leq c_x\|f\|_{B}~~\text{for all}~~f\in B
\end{equation*}
\end{dfn}

\begin{dfn} $q$-Uniform Convex Space:
	Let $J_q(u):=(\partial G)(u)$ denote the subdifferential at the point $u\in B$ when $G(u)=\frac{1}{q}\|u\|_{B}^q$. Then the Banach space ${B}$ is called a $q$-uniform convex space if there exists a constant $c_{q}>0$ such that
	\begin{equation}\label{qcon}
		\begin{array}{r}
			\frac{1}{q}\left(\|v\|_{{B}}^{q}-\|u\|_{{B}}^{q}\right) \geq\left\langle v-u,j_q(u)\right\rangle_{B^*\times B}+c_{q}\|v-u\|_{{B}}^{q}
		\end{array}
	\end{equation}
	for all $u$, $v \in {B}$ and all $j_{q}(u) \in J_{q}(u)$ \cite{Xu1991Inequalities}.
	Equivalently, it means that
	\begin{equation}\label{qcon2}
		c_{q}\|v-u\|_{{B}}^{q}\leq D_G^{j_q(u)}(v,u)
	\end{equation}
\end{dfn}
\noindent  Hilbert spaces are
$2$-uniform convex, which follows directly from the parallelogram law. From \cite{Hanner1956,XuRoach1991}, we can ensure that Banach spaces such as $l^p,~L^P$ are $p$-uniform convex when $p\geq2$ and $2$-uniform for $1<p<2$.

  \subsection{Assumptions}\label{assume1}
We now make the following assumptions:
\begin{enumerate}
	\item[(A1)] The evaluation functionals $S_x:B_1\to\mathbb{R}$ with respect to the sample points $x\in X$
	\begin{equation*}
		S_x(u)=(Au)(x)
	\end{equation*}
	are uniformly continuous, that is, there exists a constant $0<k<\infty$ such that
	\begin{equation*}
		{|S_x(u)|\leq k\n u_{B_1}}\quad\forall x\in X
	\end{equation*}
Moreover, the map $u\mapsto (Au)(x)$ is measurable.
\end{enumerate}
\textbf{Remark:} The image space of $A$, equipped with the norm induced from the pre-image, makes $A(B_1)$ an RKBS. 
\begin{enumerate}
	\item[(A2)] We assume that the approximation error satisfies
	\begin{equation}\label{eq9}
		\|u^{\lambda}_{\rho}-u_{\rho}\|_{B_1}\leq c_{\beta}\lambda^{\beta}
	\end{equation}
	where $\beta$ and $c_{\beta}$ are positive constants.
\end{enumerate}
Such bounds arising from the second assumption are frequently encountered in the study of inverse problems (cf. Chapter $4.4$ of \cite{nair2009linear}, Proposition $5.8$ in
\cite{BGM}, \cite{Hofmann2007}, and \cite{JOSEPH2026102050}).\\

\noindent Further, we make the following assumptions on the nature of $\Omega$.
\begin{enumerate}\label{assume}
	\item[(B1)] $\Omega$ is a proper convex function on $B_1$.
	\item [(B2)] There exists a topology $W$ on $B_1$ by which both $S_x$ and $\Omega$ are lower semicontinuous.
	\item[(B3)] The sublevel set of $\Omega$ for any constant $c>0$, i.e.,
	\begin{equation*}
		\mathcal{M}_c(\Omega):=\{u\in B_1~|~\Omega(u)\leq c\}
	\end{equation*}
	 is pre compact in the topology $W$.
\end{enumerate}
In the numerical illustration part, we consider the topology $W$ to be the weak* topology. Also, the minimizer is unique if the minimizing functional in \eqref{reg} and \eqref{regtrue} are strictly convex.

\section{Main results}\label{sec3}
In this section, we present the major results associated with
the convergence and convergence rate of the proposed scheme. We will state the theorems and the associated corollary.

\begin{theorem}\label{thm1}
	Let $(B_1,\|\cdot\|_{B_1})$ be a Banach space and $V$ be a real vector space of functions
	that maps a metric space $X$ to $\mathbb{R}$. Consider $A:B_1\to V$ to be an injective linear operator
	such that the assumptions \eqref{assume1} are true. Then there exists a $\xi(u_{z}^{\lambda})\in (\partial\Omega)(u_{z}^{\lambda})$ such that
	\begin{equation*}
		\xi\left(u_{z}^{\lambda}\right)=\frac{p}{m\lambda}\sum_{i=1}^{m}S_{x_i}^*\Big( \left|y_i-(Au_{z}^{\lambda})(x_i)\right|^{p-1}\text{sgn}[y_i-(Au_{z}^{\lambda})(x_i)]\Big),
	\end{equation*}
	and $\forall \epsilon>0$, we have
	\begin{align*}
		\underset{z\in Z^m}{Prob}\left\{D_{\Omega}^{\xi(u_{z}^{\lambda})}(u_{\rho},u_{z}^{\lambda})\leq(M_{\lambda}+R_{\lambda})c_{\beta}\lambda^{\beta}+\epsilon\right\}\geq 1-2\exp\left\{-\frac{mp^2\epsilon^2}{2\lambda^2\omega_{\lambda}^2}\right\}
	\end{align*}
	where $R_{\lambda},~M_{\lambda}$, and $\omega_{\lambda}$ are positive constants that depend only on $\lambda$.
\end{theorem}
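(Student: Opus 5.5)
First I would derive the subgradient identity from the first-order optimality condition for \eqref{reg}. The empirical data term $F_z(u)=\frac1m\sum_{i=1}^m|y_i-S_{x_i}u|^p$ is convex, and for $p>1$ it is Fréchet differentiable on $B_1$. By (A1) each $S_{x_i}$ is a bounded linear functional, so its adjoint $S_{x_i}^*:\mathbb{R}\to B_1^*$ is defined, and the chain rule gives
\[
F_z'(u)=-\frac pm\sum_{i=1}^m S_{x_i}^*\Big(|y_i-S_{x_i}u|^{p-1}\text{sgn}[y_i-S_{x_i}u]\Big).
\]
Since $\Omega$ is continuous and convex, the sum rule for subdifferentials applies. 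The minimality of $u_z^\lambda$ then gives $0\in F_z'(u_z^\lambda)+\lambda\partial\Omega(u_z^\lambda)$, which is exactly the claimed formula for $\xi(u_z^\lambda)$. In the same way, the minimality of $u_\rho^\lambda$ in \eqref{regtrue} gives a subgradient
\[
\xi(u_\rho^\lambda)=\frac p\lambda\int_Z S_x^*\big(|y-S_xu_\rho^\lambda|^{p-1}\text{sgn}[y-S_xu_\rho^\lambda]\big)\,d\rho .
\]
Differentiating under the integral is justified because $|y|\le M$ and $|S_xu|\le k\|u\|_{B_1}$.

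Next I would bound the Bregman distance. Since $D_\Omega^{\xi(u_z^\lambda)}(u_\rho,u_z^\lambda)\le D_\Omega^{\xi(u_z^\lambda)}(u_\rho,u_z^\lambda)+D_\Omega^{\pi(u_\rho)}(u_z^\lambda,u_\rho)$, I would split the argument through $u_\rho^\lambda$ using the three-point identity
\[
D^{\xi_z}(u_\rho,u_z)=D^{\xi_z}(u_\rho^\lambda,u_z)+D^{\xi^\lambda}(u_\rho,u_\rho^\lambda)+\langle u_\rho-u_\rho^\lambda,\xi^\lambda-\xi_z\rangle .
\]
The piece $D^{\xi^\lambda}(u_\rho,u_\rho^\lambda)$ is at most $\|u_\rho-u_\rho^\lambda\|\,\|\xi^\lambda-\pi(u_\rho)\|$ by \eqref{ineq}, hence at most $c_\beta\lambda^\beta R_\lambda$ by (A2). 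Here $R_\lambda$ bounds the subgradient norms. The cross term is at most $c_\beta\lambda^\beta\|\xi^\lambda-\xi_z\|_{B_1^*}$. For the bound on $\|\xi_z\|$, I would use $\lambda\Omega(u_z^\lambda)\le F_z(0)+\lambda\Omega(0)\le M^p+\lambda\Omega(0)$, so $u_z^\lambda$ lies in a sublevel set. Together with a norm bound $\|u_z^\lambda\|\le C_\lambda$, this gives $\|\xi_z\|\le \frac p\lambda k(M+kC_\lambda)^{p-1}=:M_\lambda$, a bound independent of $z$. I would take the norm bound either as an assumption or argue it via continuity and coercivity of $\Omega$; this is the point where the constants become "dependent only on $\lambda$".

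The remaining piece is $D^{\xi_z}(u_\rho^\lambda,u_z)\le\langle u_\rho^\lambda-u_z,\xi^\lambda-\xi_z\rangle$. This is the stochastic term. By monotonicity of $F_z'$ it is controlled by $\langle u_\rho^\lambda-u_z,\frac1\lambda(F_z'(u_\rho^\lambda)-\mathcal E_\rho'(u_\rho^\lambda))\rangle$. So it reduces to a single random variable at the \emph{fixed}, data-independent point $u_\rho^\lambda$:
\[
\Phi(z)=\frac p{\lambda m}\sum_i \big(u_\rho^\lambda-u_z\big)\text{-paired terms}.
\]
To avoid the dependence on $z$ through $u_z$, I would bound it by $\frac{p}{\lambda}\big|\frac1m\sum_i\eta(z_i)-\mathbb E\eta\big|$ with scalar $\eta(z)=|y-S_xu_\rho^\lambda|^{p-1}\text{sgn}(\cdot)\,S_x w$, taking $w$ fixed, or with a Banach-valued version bounded uniformly by $\omega_\lambda/p$. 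The main obstacle is exactly this decoupling, because $u_z$ is random. I would first try to absorb $\|u_\rho^\lambda-u_z\|$ into the constant through the uniform bound $C_\lambda$, and then apply Hoeffding's inequality to bounded i.i.d. real variables with range at most $\omega_\lambda$. This gives deviation $\epsilon$ with probability at least $1-2\exp(-mp^2\epsilon^2/(2\lambda^2\omega_\lambda^2))$, the scaling of $\epsilon$ by $p/\lambda$ producing the stated exponent. Combining the three pieces yields the bound $(M_\lambda+R_\lambda)c_\beta\lambda^\beta+\epsilon$.
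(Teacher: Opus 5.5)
Up to the stochastic term your outline is the paper's. The sum rule at the minimizer gives the formula for $\xi(u_z^{\lambda})$ (Lemmas \ref{lemma1}--\ref{lemma2} with the empirical measure). You use the same three-point identity, and you bound the approximation and cross terms via \eqref{ineq} and (A2) as the paper does. Your monotonicity step is a real refinement of Lemma \ref{lemma3}. It bounds $D_{\Omega}^{\xi(u_z^{\lambda})}(u_{\rho}^{\lambda},u_z^{\lambda})$ by $\frac{p}{\lambda}\langle u_{\rho}^{\lambda}-u_z^{\lambda},\int_Z S_x^*(W_{\rho}^{\lambda})\,d\rho-\frac1m\sum_i S_{x_i}^*(W_{\rho}^{\lambda}(x_i,y_i))\rangle$, with the weight evaluated at the deterministic point $u_{\rho}^{\lambda}$. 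The paper's Lemma \ref{lemma3} with $\gamma=\gamma_z$ instead leaves $W_{\gamma_z}^{\lambda}$ evaluated at $u_z^{\lambda}$; the paper only swaps the roles in Theorem \ref{thm3}.

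\textbf{The gap.} The decoupling you flag is a genuine gap, and neither of your fixes closes it. The summands $W_{\rho}^{\lambda}(x_i,y_i)\,S_{x_i}(u_{\rho}^{\lambda}-u_z^{\lambda})$ are not i.i.d.\ values of a fixed bounded function, because $u_z^{\lambda}$ depends on the whole sample, so Lemma \ref{lemma4} does not apply. Fixing $w$ does not help, since the pairing is with the random direction, not with $w$. Absorbing $\|u_{\rho}^{\lambda}-u_z^{\lambda}\|_{B_1}\le\|u_{\rho}^{\lambda}\|_{B_1}+C_{\lambda}$ leaves
\[
\Big\|\int_Z S_x^*\big(W_{\rho}^{\lambda}(x,y)\big)\,d\rho-\frac1m\sum_{i=1}^m S_{x_i}^*\big(W_{\rho}^{\lambda}(x_i,y_i)\big)\Big\|_{B_1^*}
=\sup_{\|w\|_{B_1}\le1}\Big|\int_Z W_{\rho}^{\lambda}(x,y)\,(Aw)(x)\,d\rho-\frac1m\sum_{i=1}^m W_{\rho}^{\lambda}(x_i,y_i)\,(Aw)(x_i)\Big|,
\]
which is the supremum of an empirical process over the unit ball. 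Scalar Hoeffding controls each $w$ separately, not this supremum. You would need a uniform deviation bound of one of two kinds:
\begin{enumerate}
\item Covering numbers or Rademacher complexity of $\{(x,y)\mapsto W_{\rho}^{\lambda}(x,y)(Aw)(x):\|w\|_{B_1}\le1\}$. This introduces a complexity term absent from the statement.
\item A Hoeffding--Pinelis inequality in $B_1^*$. This needs $B_1^*$ to be $2$-smooth, which fails for $B_1=l^1$, $B_1^*=l^\infty$; in $l^\infty$, empirical means of bounded i.i.d.\ vectors need not even converge in norm.
\end{enumerate}
Nothing in (A1), (A2), (B1)--(B3) provides either.

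\textbf{How the paper handles this.} The paper does not supply this step either. It applies Lemma \ref{lemma4} to $h(x,y)=\frac{p}{\lambda}\langle u_{\rho}^{\lambda}-u_z^{\lambda},S_x^*(W_{\gamma_z}^{\lambda}(x,y))\rangle$, which depends on the sample through $u_z^{\lambda}$ in both factors, with a bound $\omega_{\lambda}$ that contains $\|u_z^{\lambda}\|_{B_1}$. It makes $M_{\lambda}$ sample-independent just by dropping the superscript. In Theorem \ref{thm3} its norming vector $u$ depends on $z$, which is your fixed-$w$ idea with the same defect.

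\textbf{Other points.} Your concern about a deterministic bound $C_{\lambda}$ is legitimate and more careful than the paper. Such a bound needs coercivity of $\Omega$, or for example the topology in (B2) being weak$^*$ together with uniform boundedness; (B3) alone does not give it. Finally, rescaling by $p/\lambda$ in Hoeffding gives $\exp\{-m\lambda^2\epsilon^2/(2p^2\omega_{\lambda}^2)\}$, not the stated exponent. That is what the paper's proof actually derives. The exponent in the statement has $p$ and $\lambda$ interchanged and is recovered only by renaming $\omega_{\lambda}$.
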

	\vspace{0.3cm}
The next theorem is for the case when $\Omega(u)=\|u\|_{B_1}$. It is evident that such penalty functions have been used in the literature; see \cite{Lorenz2008}, \cite{doi:10.1137/24M1719785}.
\begin{theorem}\label{thm2}
	Let $\delta\in(0,1)$ and suppose that the assumptions of Theorem \ref{thm1} hold true. Then, with a confidence of $1-\delta$, we have
	
	\begin{align}\label{ineq22}
		D_{\Omega}^{\xi(u_{z}^{\lambda})}(u_{\rho},u_{z}^{\lambda})\leq(M_{\lambda}+R_{\lambda})c_{\beta}\lambda^{\beta}+\frac{\sqrt{2\ln(2/\delta)}p\omega_{\lambda}}{\lambda\sqrt{m}}
	\end{align}

	Further, assume that $\Omega(u)=\|u\|_{B_1}$. Then with a confidence of $1-\delta$, we have the following convergence 
	\begin{equation}
		D_{\Omega}^{\xi(u_{z}^{\lambda})}(u_{\rho},u_{z}^{\lambda})\leq4c_{\beta}\lambda^{\beta}+{2p{\sqrt{2\ln(2/\delta)}}}~\frac{\left[\lambda\sigma_{\max}+kM^p\right]^{p}}{\lambda^{p+1}\sqrt{m}}
	\end{equation}
	where $\sigma_{\max}=\max\{k\|u_{\rho}\|_{B_1},M\}$.
\end{theorem}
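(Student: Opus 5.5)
The first inequality \eqref{ineq22} is a direct inversion of the probability bound in Theorem \ref{thm1}. I will set the failure probability equal to $\delta$,
\[
2\exp\Big\{-\tfrac{mp^2\epsilon^2}{2\lambda^2\omega_\lambda^2}\Big\}=\delta ,
\]
and solve for $\epsilon$. This gives $\epsilon=\frac{\lambda\omega_\lambda\sqrt{2\ln(2/\delta)}}{p\sqrt m}$. The stated form $\frac{\sqrt{2\ln(2/\delta)}\,p\,\omega_\lambda}{\lambda\sqrt m}$ suggests that the exponent in Theorem \ref{thm1} is really of the form $-\frac{m\lambda^2\epsilon^2}{2p^2\omega_\lambda^2}$. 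This would arise from a McDiarmid/Hoeffding bound applied to the random functional $\xi(u_z^\lambda)$, which carries a factor $\frac{p}{m\lambda}$. I will therefore go back to the concentration step in the proof of Theorem \ref{thm1}. There I will identify the bounded-difference constant as $c_i=\frac{2p\,\omega_\lambda}{m\lambda}$, or whichever normalization the proof actually uses, and read off $\epsilon$ consistently. Substituting this $\epsilon$ into Theorem \ref{thm1} yields \eqref{ineq22}, holding with probability at least $1-\delta$.

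For the second part I specialize to $\Omega=\|\cdot\|_{B_1}$. Every subgradient of the norm satisfies $\|\xi\|_{B_1^*}\le 1$. This is the key structural fact, and I expect it to make $M_\lambda$ and $R_\lambda$ explicit. In the proof of Theorem \ref{thm1} these constants should arise from bounding $\|\xi(u_z^\lambda)-\xi(u_\rho^\lambda)\|$ or $\|\xi(u^\lambda_\rho)\|$ via \eqref{ineq}, applied to $u_\rho$ and $u_\rho^\lambda$ with the distance $\|u_\rho-u^\lambda_\rho\|\le c_\beta\lambda^\beta$ from (A2). With norm subgradients each dual-norm term is at most $1$, and a difference of two subgradients is at most $2$. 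I therefore expect $M_\lambda\le 2$ and $R_\lambda\le 2$, which gives the coefficient $4c_\beta\lambda^\beta$.

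The remaining task is to bound $\omega_\lambda$. I expect $\omega_\lambda$ to be a uniform bound on $\|S_x^*(|y-(Au)(x)|^{p-1}\mathrm{sgn}(\cdot))\|_{B_1^*}$ evaluated at the relevant minimizer, up to how the paper defines it. By (A1), $\|S_x^*\|\le k$, so it suffices to bound $|y-(Au_z^\lambda)(x)|$. For that I will bound $\|u_z^\lambda\|$ by comparing the objective in \eqref{reg} at $u_z^\lambda$ against its value at $u=0$:
\[
\lambda\|u_z^\lambda\|\le \frac1m\sum_i|y_i|^p\le M^p .
\]
This gives $|y-(Au_z^\lambda)(x)|\le M+kM^p/\lambda$. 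Alternatively, comparing at $u_\rho$ brings in $k\|u_\rho\|$, which is where $\sigma_{\max}=\max\{k\|u_\rho\|,M\}$ enters. Combining these yields a bound of order $(\lambda\sigma_{\max}+kM^p)^{p-1}/\lambda^{p-1}$, times $k$. I expect the precise exponent bookkeeping (the stated bound has power $p$ and $\lambda^{p+1}$) to be the main obstacle. It requires matching exactly how $\omega_\lambda$ is defined in the proof of Theorem \ref{thm1}, absorbing constants such as $k\le\ldots$ into the bracket, and using crude bounds like $a^{p-1}\le a^p$ for $a\ge1$ or similar normalization. Once $\omega_\lambda\le 2[\lambda\sigma_{\max}+kM^p]^p/\lambda^p$ (or a comparable bound) is established, substituting it into \eqref{ineq22} gives the final stated estimate.
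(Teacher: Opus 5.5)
Your treatment of the first inequality agrees with the paper: you read the exponent of Theorem~\ref{thm1} as $-\frac{m\lambda^2\epsilon^2}{2p^2\omega_\lambda^2}$, which is what its proof actually delivers, and solve for $\epsilon$. Your choice $M_\lambda=R_\lambda=2$, from $\|\xi\|_{B_1^*}\le 1$ for subgradients of the norm, also agrees with the paper.

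The gap is in the bound on $\omega_\lambda$, and it is a missing factor, not a normalization issue. In the proof of Theorem~\ref{thm1}, Lemma~\ref{lemma4} is applied to the scalar
\[
h(x,y)=\frac{p}{\lambda}\big\langle u_\rho^\lambda-u_z^\lambda,\,S_x^*(W_{\gamma_z}^\lambda(x,y))\big\rangle_{B_1\times B_1^*} .
\]
Accordingly, $\omega_\lambda=2k\big(\|u_\rho^\lambda\|_{B_1}+\|u_z^\lambda\|_{B_1}\big)\big[M+k\|u_z^\lambda\|_{B_1}\big]^{p-1}$ bounds the pairing with $u_\rho^\lambda-u_z^\lambda$, not just $\|S_x^*(W)\|_{B_1^*}$. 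Because you drop the factor $\|u_\rho^\lambda-u_z^\lambda\|_{B_1}$, you only reach the power $p-1$. The lone $k$ in your estimate is precisely the one that should multiply this missing norm. Patching with $a^{p-1}\le a^p$ is not justified, since nothing forces $a\ge 1$, and it would still bound the wrong quantity.

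Three steps close the gap. First, $\|u_\rho^\lambda-u_z^\lambda\|_{B_1}\le\|u_\rho^\lambda\|_{B_1}+\|u_z^\lambda\|_{B_1}$. Second, your bound $\|u_z^\lambda\|_{B_1}\le M^p/\lambda$ from comparison with $u=0$, which is correct. Third, $\|u_\rho^\lambda\|_{B_1}\le\|u_\rho\|_{B_1}$: compare $\mathcal{E}_\rho+\lambda\|\cdot\|_{B_1}$ at $u_\rho^\lambda$ and $u_\rho$, and use $\mathcal{E}_\rho(u_\rho)\le\mathcal{E}_\rho(u_\rho^\lambda)$. This third step is where $k\|u_\rho\|_{B_1}$ enters, not through $|y-(Au_z^\lambda)(x)|$ or a comparison of the empirical functional at $u_\rho$. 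Since both $k\|u_\rho\|_{B_1}$ and $M$ are at most $\sigma_{\max}$,
\[
k\big(\|u_\rho^\lambda\|_{B_1}+\|u_z^\lambda\|_{B_1}\big)\big[M+k\|u_z^\lambda\|_{B_1}\big]^{p-1}\le\Big(k\|u_\rho\|_{B_1}+\frac{kM^p}{\lambda}\Big)\Big(M+\frac{kM^p}{\lambda}\Big)^{p-1}\le\Big(\sigma_{\max}+\frac{kM^p}{\lambda}\Big)^{p} .
\]
Hence $\omega_\lambda=2[\lambda\sigma_{\max}+kM^p]^p/\lambda^{p}$, and the extra $1/\lambda$ in $\frac{p\,\omega_\lambda}{\lambda\sqrt m}$ gives the stated $\lambda^{p+1}$.
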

\vspace{0.3cm}
\noindent The following corollary is now immediate.
\begin{Corollary}\label{cor3}
	Let $\delta\in(0,1)$ and suppose that the assumptions of Theorem \ref{thm2} hold true. If $m$ is large enough, then for
	\begin{equation}\label{alp}
		\lambda={m^{-\frac{1}{2(p+1+\beta)}}}
	\end{equation}
	with a confidence of $1-\delta$, we have the following convergence rate
	\begin{equation}\label{conv}
		D_{\Omega}^{\xi(u_{z}^{\lambda})}(u_{\rho},u_{z}^{\lambda})=O(\lambda^{\beta}).
	\end{equation}	
\end{Corollary}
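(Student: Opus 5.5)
Substituting \eqref{alp} into the bound \eqref{ineq22} of Theorem \ref{thm2} for $\Omega(u)=\|u\|_{B_1}$ should make both terms of order $\lambda^{\beta}$; here $\delta$, $p$, $k$, $M$, $\|u_\rho\|_{B_1}$ and $c_\beta$ are all fixed and independent of $m$. With confidence $1-\delta$ we have
\begin{equation*}
D_{\Omega}^{\xi(u_{z}^{\lambda})}(u_{\rho},u_{z}^{\lambda})\leq 4c_{\beta}\lambda^{\beta}+2p\sqrt{2\ln(2/\delta)}\,\frac{\left[\lambda\sigma_{\max}+kM^p\right]^{p}}{\lambda^{p+1}\sqrt{m}},
\end{equation*}
and the first term is already $O(\lambda^\beta)$. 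So everything reduces to the stochastic term.

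For the stochastic term, take $m$ large enough that $\lambda=m^{-1/(2(p+1+\beta))}\leq 1$. Then $\lambda\sigma_{\max}+kM^p\leq \sigma_{\max}+kM^p$, so the numerator is bounded by the constant $C_0:=(\sigma_{\max}+kM^p)^p$, which does not depend on $m$. The rule \eqref{alp} gives $\sqrt{m}=\lambda^{-(p+1+\beta)}$, and hence
\begin{equation*}
\frac{1}{\lambda^{p+1}\sqrt m}=\frac{\lambda^{p+1+\beta}}{\lambda^{p+1}}=\lambda^{\beta}.
\end{equation*}
The second term is therefore at most $2p\sqrt{2\ln(2/\delta)}\,C_0\,\lambda^\beta$.

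Adding the two bounds gives, with confidence $1-\delta$,
\begin{equation*}
D_{\Omega}^{\xi(u_{z}^{\lambda})}(u_{\rho},u_{z}^{\lambda})\leq \Big(4c_\beta+2p\sqrt{2\ln(2/\delta)}\,C_0\Big)\lambda^\beta,
\end{equation*}
which is \eqref{conv}. The condition that $m$ is large is used only to ensure $\lambda\le1$, so that the numerator stays uniformly bounded. The bound is $O(m^{-\beta/(2(p+1+\beta))})$ in terms of $m$. The choice of exponent in \eqref{alp} is exactly the one that balances the two terms, $\lambda^\beta\asymp\lambda^{-(p+1)}m^{-1/2}$. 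The only point needing care is that the hidden constant in $O(\cdot)$ depends on $\delta$ through $\sqrt{\ln(2/\delta)}$, but not on $m$ or $\lambda$.
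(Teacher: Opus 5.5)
Your proposal is correct and follows the same route as the paper. You substitute the parameter choice \eqref{alp} into the $\Omega=\|\cdot\|_{B_1}$ bound of Theorem \ref{thm2} and use $\sqrt{m}=\lambda^{-(p+1+\beta)}$, so the stochastic term becomes a constant times $\lambda^{\beta}$. Your estimate $[\lambda\sigma_{\max}+kM^p]^p\le(\sigma_{\max}+kM^p)^p$, based on $\lambda\le 1$, is a rigorous replacement for the paper's approximation $[\sigma_{\max}m^{-\alpha}+kM^p]^p\approx (kM^p)^p$, which is not actually an upper bound; note also that $\lambda\le 1$ holds for every $m\ge 1$, so largeness of $m$ is not really needed.
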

\vspace{0.3cm}
So far, we have considered a generic Banach space. We will show that when $B_1$ is a $q$- uniform convex space, we are able to obtain a sharper convergence rate than before. We use $\Omega(u)=\frac{1}{q}\|u\|_{B_1}^q,q>1$, to obtain the convergence in terms of norm. Since uniformly convex spaces are reflexive, the minimizers $u_{\rho}^{\lambda}$ and $u_z^{\lambda}$ are well defined since the right-hand side minimizes a strictly convex, continuous, and coercive function (\cite{MR463994}, proposition
$1.2$).

\begin{theorem}\label{thm3}
	Let $(B_1,\|\cdot\|_{B_1})$ be a $q$-uniform convex Banach space and $V$ be a real vector space of functions
	that maps a metric space $X$ to $\mathbb{R}$. Consider $A:B_1\to V$ to be an injective linear operator
	such that the assumptions \eqref{assume1} are true. Define $\Omega(u)$ to be $\frac{1}{q}\|u\|_{B_1}^q$. Then 
	\begin{align}\label{ineq27}
		\underset{z\in Z^m}{\text{Prob}}\left\{\left\|u_z^{\lambda}-u_{\rho}\right\|_{B_1}\leq c_{\beta}{\lambda}^{\beta}+\epsilon\right\} 
		\geq 1-2 \exp \left(-\frac{m \lambda^{2} c_{q}^{2} \epsilon^{2(q-1)}}{2p^2
			\omega^{2}}\right)
	\end{align}
	where $\omega$ is a positive constant independent of $\lambda$.
\end{theorem}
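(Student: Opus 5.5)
Split the error as $\|u_z^{\lambda}-u_{\rho}\|_{B_1}\le \|u_z^{\lambda}-u_{\rho}^{\lambda}\|_{B_1}+\|u_{\rho}^{\lambda}-u_{\rho}\|_{B_1}$. Assumption (A2) bounds the second term by $c_{\beta}\lambda^{\beta}$, so it suffices to show that the sample error satisfies $\|u_z^{\lambda}-u_{\rho}^{\lambda}\|_{B_1}\le\epsilon$ with the stated probability. With $\Omega=\frac1q\|\cdot\|_{B_1}^q$ we have $\partial\Omega=J_q$. The $q$-uniform convexity \eqref{qcon2} gives $c_q\|v-u\|^q\le D_\Omega^{j_q(u)}(v,u)$. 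Symmetrizing with \eqref{ineq} then yields
\begin{equation*}
2c_q\|u_z^{\lambda}-u_{\rho}^{\lambda}\|_{B_1}^q\le \|u_z^{\lambda}-u_{\rho}^{\lambda}\|_{B_1}\,\|\xi(u_z^{\lambda})-\xi(u_{\rho}^{\lambda})\|_{B_1^*},
\end{equation*}
that is, $\|u_z^{\lambda}-u_{\rho}^{\lambda}\|^{q-1}\le \frac{1}{2c_q}\|\xi(u_z^{\lambda})-\xi(u_{\rho}^{\lambda})\|_{B_1^*}$. This is where the exponent $q-1$ in the probability bound comes from.

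Next I use the optimality conditions. Theorem~\ref{thm1} gives $\xi(u_z^{\lambda})=\frac{p}{m\lambda}\sum_i S_{x_i}^*\big(\psi(y_i-S_{x_i}u_z^{\lambda})\big)$ with $\psi(t)=|t|^{p-1}\mathrm{sgn}(t)$. The analogous first-order condition for \eqref{regtrue}, obtained by differentiating under the integral using (A1), gives $\xi(u_\rho^\lambda)=\frac{p}{\lambda}\int_Z S_x^*\psi(y-S_xu_\rho^\lambda)\,d\rho$. I then compare the two functionals against the empirical version evaluated at $u_\rho^\lambda$, writing $\xi(u_z^\lambda)-\xi(u_\rho^\lambda)=T_1+T_2$, where
\begin{equation*}
T_2=\frac{p}{\lambda}\Big(\frac1m\sum_i \zeta(z_i)-\mathbb{E}\zeta\Big),\qquad \zeta(z)=S_x^*\psi(y-S_xu_\rho^\lambda).
\end{equation*}
The term $T_1$ is the difference of empirical sums at $u_z^\lambda$ and at $u_\rho^\lambda$. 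Pairing with $u_z^\lambda-u_\rho^\lambda$ and using the monotonicity of $\psi$, $T_1$ contributes a term of favorable sign. So I will work with the pairing $\langle u_z^\lambda-u_\rho^\lambda,\cdot\rangle$ rather than with norms directly: $2c_q\|u_z^\lambda-u_\rho^\lambda\|^q\le\langle u_z^\lambda-u_\rho^\lambda,\xi(u_z^\lambda)-\xi(u_\rho^\lambda)\rangle\le\|u_z^\lambda-u_\rho^\lambda\|\,\|T_2\|_{B_1^*}$. Here I need the sign convention that the data term's derivative is $-S_x^*\psi$, which makes $T_1$ nonpositive in the pairing. This gives $\|u_z^\lambda-u_\rho^\lambda\|^{q-1}\le \frac{1}{2c_q}\|T_2\|$.

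Finally comes the concentration step. Each $\zeta(z_i)$ is bounded in $B_1^*$ by $k\,(M+k\|u_\rho^\lambda\|)^{p-1}$, and $\|u_\rho^\lambda\|\le\|u_\rho\|+c_\beta\lambda^\beta$ is bounded uniformly for $\lambda\le1$. This yields a constant $\omega$ independent of $\lambda$ (this is the point where the $q$-uniform case improves on Theorem~\ref{thm1}). The hard part is a Hoeffding-type inequality for i.i.d. bounded random elements in $B_1^*$, which is not a Hilbert space. Since the target bound has exactly the Hoeffding shape $2\exp(-m t^2/(2\omega^2))$, I would avoid needing a vector-valued inequality. To do so, I pair with the fixed direction $u_z^\lambda-u_\rho^\lambda$, handled as in the proof of Theorem~\ref{thm1}, so that only a scalar McDiarmid/Hoeffding bound is required: replacing one sample changes the relevant quantity by at most $2\omega/m$. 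Setting $\frac{p}{\lambda}t=c_q\epsilon^{q-1}$ and requiring the deviation to be at most $t$ yields $\|u_z^\lambda-u_\rho^\lambda\|\le\epsilon$ with probability at least $1-2\exp\big(-m\lambda^2c_q^2\epsilon^{2(q-1)}/(2p^2\omega^2)\big)$, after adjusting constants in $\omega$. Combined with (A2), this gives \eqref{ineq27}. I expect this concentration step, namely justifying the scalar reduction when the pairing direction is itself data-dependent, to be the main obstacle.
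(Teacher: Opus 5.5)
Your deterministic reduction is correct and is essentially the paper's. The paper applies Lemma~\ref{lemma3} with $\rho$ and $\gamma_z$ interchanged and then \eqref{qcon2}. That gives the same inequality you obtain from the monotonicity of $\psi$ and the symmetrized Bregman identity (yours carries an extra factor $2$). Either way one reaches
\[
\frac{\lambda c_q}{p}\|u_z^{\lambda}-u_{\rho}^{\lambda}\|_{B_1}^{q-1}\le F(z):=\Big\|\frac1m\sum_{i=1}^m\zeta(z_i)-\mathbb{E}\zeta\Big\|_{B_1^*},\qquad \sup_{z\in Z}\|\zeta(z)\|_{B_1^*}\le\omega_0:=k\,(M+k\|u_\rho\|_{B_1})^{p-1}.
\]
Here $\|u_\rho^\lambda\|_{B_1}\le\|u_\rho\|_{B_1}$ holds for every $\lambda>0$ (compare the two minimization problems), so you need not restrict to $\lambda\le 1$.

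The gap is exactly the step you flagged, and neither of your suggested fixes closes it. The numbers $\langle u_z^\lambda-u_\rho^\lambda,\zeta(z_i)\rangle$ are not independent, because the direction depends on every $z_j$, so Lemma~\ref{lemma4} does not apply. Handling it ``as in the proof of Theorem~\ref{thm1}'' inherits the same defect, since the $h$ used there also contains $u_z^\lambda$. McDiarmid's inequality does apply to $F$, which has bounded differences $2\omega_0/m$. But it only gives $F\le\mathbb{E}F+t$ with probability at least $1-\exp(-mt^2/(2\omega_0^2))$. What is missing is a bound on $\mathbb{E}F$, i.e.\ control of $\langle u,\cdot\rangle$ uniformly over the unit ball of $B_1$. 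That is a genuinely vector-valued statement about $B_1^*$.

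Since $B_1$ is $q$-uniformly convex, $B_1^*$ is $q'$-uniformly smooth with $q'=q/(q-1)$, hence of martingale type $q'$. This gives $\mathbb{E}F\lesssim\omega_0\,m^{-1/q}$.
\begin{itemize}
\item For $q=2$ this term is $O(m^{-1/2})$. It can be absorbed into a bound of the stated shape after enlarging $\omega$ by a factor depending only on the smoothness constant of $B_1^*$ (or use Pinelis' inequality directly).
\item For $q>2$ an additive term of order $m^{-1/q}$ remains, and this order is attained. Take $B_1=\ell^q$ with $S_x$ the coordinate functionals, $u_\rho=0$, $y=\pm M$ with equal probability independently of $x$, and pairwise distinct sample points; then $F=M^{p-1}m^{-1/q}$.
\end{itemize}
So for $q>2$ no bound $\mathrm{Prob}\{F>t\}\le 2\exp(-mt^2/(2\omega^2))$ with $\omega$ depending only on $M,k,p,\|u_\rho\|_{B_1}$ is available, and the stated estimate cannot be reached along this route.

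The paper does not supply the missing step either. It picks, via reflexivity, a norming element $u$ of the unit ball for the random functional and applies Lemma~\ref{lemma4} to $h(z)=W_\rho^\lambda(x,y)(Au)(x)$. That $u$ is chosen after seeing the sample, which is precisely the data-dependent-direction problem you identified.
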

\vspace{0.3cm}
Finally, we have a corollary of the above result.
\begin{Corollary}\label{cor2}
	Let $\delta\in(0,1)$ and suppose that the assumptions of Theorem \ref{thm3} hold true. Then, for
	\begin{equation}\label{lambda}
		\lambda={m^{-\frac{1}{2(1+\beta(q-1))}}}
	\end{equation}
	with a confidence of $1-\delta$, we have the following convergence rate
	\begin{equation}\label{conv}
		\left\|u_z^{\lambda}-u_{\rho}\right\|_{B_1}=O(\lambda^{\beta}).
	\end{equation}
\end{Corollary}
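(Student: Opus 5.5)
We obtain Corollary \ref{cor2} by choosing $\epsilon$ in Theorem \ref{thm3} so that the confidence level is exactly $1-\delta$, and then balancing the two error terms with the choice of $\lambda$. Setting the exceptional probability equal to $\delta$,
\begin{equation*}
2\exp\left(-\frac{m\lambda^2 c_q^2\epsilon^{2(q-1)}}{2p^2\omega^2}\right)=\delta ,
\end{equation*}
and solving for $\epsilon$ gives
\begin{equation*}
\epsilon=\left(\frac{\sqrt{2\ln(2/\delta)}\,p\,\omega}{c_q\,\lambda\sqrt{m}}\right)^{\frac{1}{q-1}} .
\end{equation*}
Since $q>1$, this $\epsilon$ is well defined and positive. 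Substituting it into \eqref{ineq27} shows that, with confidence at least $1-\delta$,
\begin{equation*}
\left\|u_z^{\lambda}-u_{\rho}\right\|_{B_1}\leq c_{\beta}\lambda^{\beta}+\left(\frac{\sqrt{2\ln(2/\delta)}\,p\,\omega}{c_q}\right)^{\frac{1}{q-1}}\left(\lambda\sqrt{m}\right)^{-\frac{1}{q-1}} .
\end{equation*}
The constant $\omega$ does not depend on $\lambda$ (Theorem \ref{thm3}), so the only $\lambda$- and $m$-dependence in the second term is through the factor $(\lambda\sqrt m)^{-1/(q-1)}$.

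Next I would balance the two terms by requiring $(\lambda\sqrt m)^{-1/(q-1)}=\lambda^{\beta}$. This is equivalent to $\lambda\sqrt m=\lambda^{-\beta(q-1)}$, i.e.\ $\lambda^{1+\beta(q-1)}=m^{-1/2}$, which is exactly the choice \eqref{lambda}. With this $\lambda$ the bound becomes
\begin{equation*}
\left\|u_z^{\lambda}-u_{\rho}\right\|_{B_1}\leq\left(c_{\beta}+\left(\frac{\sqrt{2\ln(2/\delta)}\,p\,\omega}{c_q}\right)^{\frac{1}{q-1}}\right)\lambda^{\beta}
\end{equation*}
with confidence $1-\delta$. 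This is the claimed rate $O(\lambda^{\beta})$, with an implied constant depending on $\delta,p,q,\omega,c_q,c_\beta$ but not on $m$.

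The computation is elementary; the one point that needs care is checking that Theorem \ref{thm3} applies uniformly in $\lambda$. Its statement holds for every $\lambda>0$ and every $\epsilon>0$, and assumption (A2) supplies the approximation term $c_\beta\lambda^\beta$ for all $\lambda>0$. Unlike Corollary \ref{cor3}, no largeness condition on $m$ is therefore needed. I would briefly point this out, noting that the $\lambda$-independence of $\omega$ is precisely what makes the clean exponent in \eqref{lambda} possible.
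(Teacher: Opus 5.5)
Your proposal is correct and is essentially the paper's proof: set the failure probability in Theorem \ref{thm3} equal to $\delta$, solve for $\epsilon$ (your expression coincides with the paper's $\epsilon=\bigl(2p^2\omega^2\ln(2/\delta)/(c_q^2m\lambda^2)\bigr)^{1/(2(q-1))}$), and substitute the given $\lambda$ to obtain the bound $\bigl(c_\beta+(\sqrt{2\ln(2/\delta)}\,p\,\omega/c_q)^{1/(q-1)}\bigr)\lambda^\beta$ with confidence $1-\delta$. Your derivation of $\lambda$ by balancing the two terms, and your remark that the $\lambda$-independence of $\omega$ removes any largeness condition on $m$, are useful additions but do not change the argument.
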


 \section{Proofs}
\label{proofs}
Now we present the proofs for the above stated theorems and corollary. We obtain a high probability upper bound for the convergence of the Bregman distance in terms of the number of data points $m$. The proofs are presented as a sequence of lemmas. In lemmas \ref{lemma1} and \ref{lemma2}, we establish an explicit form for $\partial\mathcal{E}_{\rho}$ and $(\partial \Omega)(u_{\rho}^{\lambda})$ as before. Lemma \ref{lemma3} shows an upper bound for the Bregman difference between $u_{\rho}^{\lambda}$ and $u_{\gamma}^{\lambda}$, where the latter is the regularized solution w.r.t the distribution $\gamma$. By the end of the proof of theorem \ref{thm1}, we consider $\gamma$ to be the empirical distribution to obtain a bound as given by the theorems.

Now we start with the following lemma that gives an explicit form for the subdifferential of $\mathcal{E_{\rho}}$ at any point $u\in B_1$.

\begin{lemma}\label{lemma1}
	Let $(B_1,\|\cdot\|_{B_1})$ be a Banach space and $V$ be a real vector space of functions
	that maps a compact metric space $X$ to $\mathbb{R}$. Let $A:B_1\to V$ be an injective linear operator
	such that the assumptions \ref{assume1} are true.
	If $\mathcal{E}_{\rho}$ is defined as in \eqref{err}, then 
	for any $u_{0} \in {B_1} $
	\begin{equation}
		(\partial \mathcal{E}_{\rho})\left(u_{0}\right)=\left\{ -p\int_{Z}S_x^*\Big(\left|y-(Au_0)(x)\right|^{p-1}\text{sgn}[y-(Au_0)(x)]\Big) d
		\rho\right\}.
	\end{equation}
\end{lemma}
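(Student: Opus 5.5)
Since $p>1$, the map $\phi(t)=|t|^p$ is convex and continuously differentiable on $\mathbb{R}$, with $\phi'(t)=p|t|^{p-1}\text{sgn}(t)$. So $\mathcal{E}_{\rho}$ is convex and finite on $B_1$, and the plan is to show that it is Gâteaux differentiable at every $u_0$ with derivative
\[
\mathcal{E}_{\rho}'(u_0)=-p\int_Z S_x^*\Big(|y-(Au_0)(x)|^{p-1}\text{sgn}[y-(Au_0)(x)]\Big)\,d\rho .
\]
For a convex, continuous function that is Gâteaux differentiable at $u_0$, the subdifferential is exactly the singleton $\{\mathcal{E}_{\rho}'(u_0)\}$ (standard convex analysis, \cite{rockafellar1970convex}). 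This gives the claimed set identity.

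\textbf{Step 1: finiteness and continuity.} By (A1), $|(Au)(x)|\le k\|u\|_{B_1}$ uniformly in $x$, and $|y|\le M$. Hence $|y-(Au)(x)|^p\le (M+k\|u\|_{B_1})^p$, so $\mathcal{E}_{\rho}$ is finite everywhere. The same bound shows $\mathcal{E}_{\rho}$ is bounded on bounded sets. A convex function bounded on bounded sets is continuous, which justifies the singleton characterization above.

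\textbf{Step 2: the directional derivative.} Fix $h\in B_1$ and write $r(x,y)=y-(Au_0)(x)$. For each $(x,y)$, the difference quotient
\[
\frac{|r-t(Ah)(x)|^p-|r|^p}{t}
\]
converges to $-p|r|^{p-1}\text{sgn}(r)\,(Ah)(x)$ as $t\to 0$. By the mean value theorem, for $|t|\le 1$ the quotient is bounded by $p(M+k\|u_0\|_{B_1}+k\|h\|_{B_1})^{p-1}k\|h\|_{B_1}$, a constant independent of $(x,y)$. Measurability of the integrand follows from (A1). Dominated convergence, with $\rho$ a probability measure, then gives
\[
\lim_{t\to0}\frac{\mathcal{E}_{\rho}(u_0+th)-\mathcal{E}_{\rho}(u_0)}{t}=-p\int_Z |r|^{p-1}\text{sgn}(r)\,S_x(h)\,d\rho .
\]

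\textbf{Step 3: identifying the element of $B_1^*$.} For a scalar $c$, the adjoint $S_x^*:\mathbb{R}\to B_1^*$ satisfies $\langle h,S_x^*(c)\rangle=c\,S_x(h)$. The limit in Step 2 is therefore linear in $h$ and bounded by $p(M+k\|u_0\|_{B_1})^{p-1}k\|h\|_{B_1}$, so it is a bounded linear functional $\ell\in B_1^*$.

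This is where I expect the main obstacle. The integral $\int_Z S_x^*(\cdots)\,d\rho$ is a $B_1^*$-valued integral, and $B_1^*$ need not be separable, so Bochner integrability may fail. I would interpret the integral weakly*, as the functional $h\mapsto\int_Z (\cdots)S_x(h)\,d\rho$, which is exactly $\ell$. With that interpretation the formula in the statement holds, and Gâteaux differentiability plus continuity yields $(\partial\mathcal{E}_{\rho})(u_0)=\{\ell\}$.

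For completeness, I would also verify the singleton claim directly. If $\zeta\in(\partial\mathcal{E}_{\rho})(u_0)$, then $t\langle h,\zeta\rangle\le \mathcal{E}_{\rho}(u_0+th)-\mathcal{E}_{\rho}(u_0)$ for all $t$. Dividing by $t>0$ and by $t<0$ and letting $t\to0$ gives $\langle h,\zeta\rangle=\ell(h)$ for every $h$. Conversely, $\ell$ is a subgradient because convexity makes the difference quotient monotone in $t$.
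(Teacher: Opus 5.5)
Your argument is correct, and it takes a different route from the paper's.

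The paper works pointwise. It uses the chain rule of \cite{Showalter1997} and the differentiability of $r\mapsto|r|^p$ to get the unique subgradient of $F(u)=|y-S_x(u)|^p$. It then integrates the inequality $F(v)-F(u_0)\ge\langle v-u_0,(\partial F)(u_0)\rangle$ over $Z$, which shows that $\ell=-p\int_Z S_x^*(\cdots)\,d\rho$ is a subgradient of $\mathcal{E}_{\rho}$ at $u_0$. Taken literally, that proves only $\{\ell\}\subseteq(\partial\mathcal{E}_{\rho})(u_0)$. Knowing that each integrand has a singleton subdifferential does not by itself exclude further subgradients of the integral, and the paper asserts the equality without closing this gap.

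Your route uses dominated convergence on the difference quotients, with the uniform bound from (A1) and $|y|\le M$, to get G\^ateaux differentiability, and then the two-sided limit argument. This gives both inclusions. The singleton property is exactly what Lemma~\ref{lemma2} needs to read off $\xi(u_{\rho}^{\lambda})$ from $0\in(\partial\mathcal{E}_{\rho})(u_{\rho}^{\lambda})+\lambda(\partial\Omega)(u_{\rho}^{\lambda})$.

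What each approach buys:
\begin{itemize}
\item The paper's integration step is shorter and would still produce a subgradient for a nonsmooth loss such as $p=1$, though there too it would give only one inclusion.
\item Your proof relies on $p>1$, but it is self-contained and complete for the lemma as stated.
\end{itemize}

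Your weak$^*$ reading of the $B_1^*$-valued integral is the one the paper uses implicitly, since it identifies that integral with the functional $v\mapsto\int_Z(\partial F)(u_0)(v-u_0)\,d\rho$. So neither route needs Bochner integrability.

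One small point: both arguments need measurability of $x\mapsto(Au)(x)$. Assumption (A1) as written states measurability of $u\mapsto(Au)(x)$, which is presumably a typo, and the paper relies on the same corrected reading.
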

 \begin{proof}
	We begin with a function $F:B_1\to\mathbb{R}$ identified as
	\begin{equation*}
		F(u)=\left|y-(Au)(x)\right|^p=\left|y-S_x(u)\right|^p
	\end{equation*}
	
	\noindent We use Proposition $7.8$ from \cite{Showalter1997} to show that the subdifferential set of $F$ corresponding to the point $u_0$ is
\begin{equation*}
	(\partial F)(u_0)=\left\{-S_x^*~(\partial
	I){[y-(Au_0)(x)]}~\right\}.
\end{equation*}

\noindent   Here $I(r)=|r|^p,~I:\mathbb{R}\to\mathbb{R}$ is a real-valued differentiable function acting at the point $y-(Au_0)(x)$.
Now we employ Theorem $25.1$ from \cite{rockafellar1970convex}. Since the gradient of $I$ at the point $r$ is of the form $p|r|^{p-1}\text{sgn}[r]$, we get
	
	\begin{equation}\label{subF}
		(\partial F)(u_0)=\left\{-S_x^*\Big(p
		\left|y-(Au_0)(x)\right|^{p-1}\text{sgn}[y-(Au_0)(x)]\Big)\right\}
	\end{equation}
	 Note that $(\partial F)(u_0)$ is a singleton set, therefore without loss of generality we use it to denote the unique subgradient itself. \\
	
	\noindent   Now, by the definition of subgradient,
	\begin{equation*}
		F(v)-F(u_0)\geq\langle v-u_0,(\partial F)(u_0)\rangle_{B_1\times B_1^*}\quad\forall v\in
		B_1.
	\end{equation*}
	
	\noindent   Integrate both sides of the inequality. By the monotonicity of the integral, we get
	\begin{align*}
		\int_Z F(v)~d\rho-\int_Z F(u_0)~d\rho&\geq\int_Z\langle v-u_0,(\partial F)(u_0)\rangle_{B_1\times B_1^*} d\rho\\
		&=\int_Z(\partial F)(u_0)(v-u_0)d\rho .
	\end{align*}
	
	\noindent   We replace the integral on the left by $\mathcal{E_{\rho}}$. Further, the RHS of the above inequality is a continuous linear functional from $B_1$ to $\mathbb{R}$ which can be written as

	\begin{align*}
	\mathcal{E_{\rho}}(v)-\mathcal{E_{\rho}}(u_0) \geq\Big\langle v-u_0,\int_Z(\partial F)(u_0)~d\rho\Big\rangle_{B_1\times
			B_1^*}.
	\end{align*}
	
	\noindent Therefore, using the definition of the subdifferential together with \eqref{subF}, we conclude that
	\begin{equation*}
		(\partial \mathcal{E}_{\rho})\left(u_{0}\right)=\left\{-p \int_{Z}S_x^*\Big( \left|y-(Au_0)(x)\right|^{p-1}\text{sgn}[y-(Au_0)(x)]\Big) d
		\rho\right\}.
	\end{equation*}
	This proves the lemma.
\end{proof}
\vspace{0.1cm}

 \begin{lemma}\label{lemma2}
	Suppose that $\Omega$ is as in \eqref{reg} and $u_{\rho}^{\lambda}$ be defined as in \eqref{regtrue}.
	Then, there exists a $\xi(u_{\rho}^{\lambda}) \in (\partial\Omega)\left(u_{\rho}^{\lambda}\right)$ which satisfies the equation
	\begin{equation}
		\lambda \xi(u_{\rho}^{\lambda})=p \int_{Z}S_x^*\Big( \left|y-(Au_{\rho}^{\lambda})(x)\right|^{p-1}\text{sgn}[y-(Au_{\rho}^{\lambda})(x)]\Big) d
		\rho .
	\end{equation}
\end{lemma}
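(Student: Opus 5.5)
The plan is to write down the first-order optimality condition for the minimization problem \eqref{regtrue} and then split the subdifferential of the sum using the explicit form of $\partial\mathcal{E}_{\rho}$ from Lemma \ref{lemma1}. Since $u_{\rho}^{\lambda}$ minimizes the convex functional $\Phi(u):=\mathcal{E}_{\rho}(u)+\lambda\Omega(u)$ over $B_1$, the definition \eqref{subd} immediately gives $0\in(\partial\Phi)(u_{\rho}^{\lambda})$: the zero functional satisfies $\Phi(v)-\Phi(u_{\rho}^{\lambda})\geq 0=\langle v-u_{\rho}^{\lambda},0\rangle_{B_1\times B_1^*}$ for every $v\in B_1$.

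The second step is the sum rule $(\partial\Phi)(u)=(\partial\mathcal{E}_{\rho})(u)+\lambda(\partial\Omega)(u)$, which I would take from the Moreau--Rockafellar theorem (e.g.\ Theorem 23.8 of \cite{rockafellar1970convex} in its Banach-space form, or the corresponding proposition in \cite{Showalter1997}). Its hypothesis is that one summand is continuous at a point where the other is finite. Here $\Omega$ is assumed continuous and convex on all of $B_1$. Alternatively, $\mathcal{E}_{\rho}$ is finite and continuous on $B_1$: by (A1), $|y-(Au)(x)|^p\leq (M+k\|u\|_{B_1})^p$ uniformly in $(x,y)$, so dominated convergence gives continuity. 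Either observation suffices. I would also use the positive homogeneity $\partial(\lambda\Omega)=\lambda\,\partial\Omega$ for $\lambda>0$, which follows directly from \eqref{subd}.

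Combining these, there exist $\eta\in(\partial\mathcal{E}_{\rho})(u_{\rho}^{\lambda})$ and $\xi(u_{\rho}^{\lambda})\in(\partial\Omega)(u_{\rho}^{\lambda})$ with $0=\eta+\lambda\xi(u_{\rho}^{\lambda})$. Lemma \ref{lemma1} says $(\partial\mathcal{E}_{\rho})(u_{\rho}^{\lambda})$ is the singleton
\[
-p\int_Z S_x^*\Big(|y-(Au_{\rho}^{\lambda})(x)|^{p-1}\text{sgn}[y-(Au_{\rho}^{\lambda})(x)]\Big)\,d\rho ,
\]
so $\eta$ equals this element. Rearranging gives exactly the claimed identity for $\lambda\xi(u_{\rho}^{\lambda})$.

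The main obstacle is justifying the sum rule. The inclusion $\partial\mathcal{E}_{\rho}+\lambda\partial\Omega\subseteq\partial\Phi$ is trivial, but we need the reverse inclusion, and that is exactly where the continuity hypothesis enters. I would verify it through the uniform bound from (A1) together with the continuity of $\Omega$ assumed after \eqref{reg}. A lesser point is that the Bochner/weak-* integral defining the element of $B_1^*$ must be well defined, and this is inherited from Lemma \ref{lemma1}.
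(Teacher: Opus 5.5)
Your proposal is correct and follows essentially the same route as the paper. Both arguments use the optimality condition $0\in\partial(\mathcal{E}_{\rho}+\lambda\Omega)(u_{\rho}^{\lambda})$, then the subdifferential sum rule (the paper cites Theorem 47B of Zeidler), and then the singleton form of $(\partial\mathcal{E}_{\rho})(u_{\rho}^{\lambda})$ from Lemma \ref{lemma1}. Your explicit check of the sum rule's continuity hypothesis, via the continuity of $\Omega$ or the uniform bound from (A1), is a detail the paper leaves implicit.
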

\begin{proof}
	By definition, ${u_{\rho}^{\lambda}}$ is the minimizer of the function $\mathcal{E_{\rho}}(u)+\lambda\Omega(u)$. Therefore, it follows that
	\begin{equation}\label{eq12}
		0 \in
		\partial\left(\mathcal{E}_{\rho}+{\lambda}\Omega\right)(u_{\rho}^{\lambda})
	\end{equation}
	Theorem $47B$ of (\cite{zeidler1985nonlinear}) gives us
	\begin{align*}
     \partial\left(\mathcal{E}_{\rho}+{\lambda}\Omega\right)(u_{\rho}^{\lambda})=\left(\partial\mathcal{E}_{\rho}\right)(u_{\rho}^{\lambda})+\lambda\left(\partial\Omega\right)(u_{\rho}^{\lambda})
		.
	\end{align*}

	\noindent Hence, by Lemma \ref{lemma1} and \eqref{eq12},
	\begin{equation*}
		0 \in\left\{-p \int_{Z}S_x^*\Big( \left|y-(Au_{\rho}^{\lambda})(x)\right|^{p-1}\text{sgn}[y-(Au_{\rho}^{\lambda})(x)]\Big) d \rho\right\}+\lambda(\partial\Omega)\left(u_{\rho}^{\lambda}\right) .
	\end{equation*}
	
	\noindent Therefore, there exist some $\xi(u_{\rho}^{\lambda}) \in (\partial\Omega)\left(u_{\rho}^{\lambda}\right)$ such that
	\begin{equation*}
		\xi\left(u_{\rho}^{\lambda}\right) =\frac{p}{\lambda
		} \int_{Z}S_x^*\Big( \left|y-(Au_{\rho}^{\lambda})(x)\right|^{p-1}\text{sgn}[y-(Au_{\rho}^{\lambda})(x)]\Big) d \rho.
	\end{equation*}
	This proves the lemma.
\end{proof}
\vspace{0.1cm}
\begin{lemma}\label{lemma3}
	Let $B_1$ be a Banach space and $\mathcal{E}_{\rho}(u)$ be as defined \eqref{err}.
	If $u_{\rho}^{\lambda}$ and $u_{\gamma}^{\lambda}$ are the minimizers given by \eqref{regtrue} for the distributions $\rho$ and $\gamma$, respectively, then,
\begin{align*}
	\frac{\lambda}{p} D_{\Omega}^{\xi(u_{\gamma}^{\lambda})}(u_{\rho}^{\lambda},u_{\gamma}^{\lambda}) \leq
	\Big\langle u_{\rho}^{\lambda}-u_{\gamma}^{\lambda}, \int_{Z}S_x^*(W_{\gamma}^{\lambda}(x,y) ) d \rho-\int_{Z}S_x^*(W_{\gamma}^{\lambda}(x,y) ) d \gamma \Big\rangle_{B_1\times
		B_1^*}
\end{align*}
	where 
	\begin{equation*}
	W_{\gamma}^{\lambda}(x,y)=\left|y-(Au_{\gamma}^{\lambda})(x)\right|^{p-1}\text{sgn}[y-(Au_{\gamma}^{\lambda})(x)]
	\end{equation*}
 and $\xi(u_{\gamma}^{\lambda})$ is the subgradient defined by lemma \ref{lemma2} for the distribution $\gamma$.
	\end{lemma}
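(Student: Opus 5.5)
We combine the optimality characterization of Lemma \ref{lemma2}, applied to $\gamma$, with the subgradient inequality for $\mathcal{E}_\rho$ from Lemma \ref{lemma1}, and then use the minimality of $u_\rho^\lambda$. Write $\Phi_\rho(u)=\mathcal{E}_\rho(u)+\lambda\Omega(u)$.

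\emph{Step 1: the subgradient at $u_\gamma^\lambda$.} Lemma \ref{lemma2} with $\gamma$ in place of $\rho$ gives
\begin{equation*}
\lambda\xi(u_\gamma^\lambda)=p\int_Z S_x^*(W_\gamma^\lambda)\,d\gamma .
\end{equation*}
Lemma \ref{lemma1} gives $(\partial\mathcal{E}_\rho)(u_\gamma^\lambda)=\{-p\int_Z S_x^*(W_\gamma^\lambda)\,d\rho\}$. Adding these, the functional
\begin{equation*}
\eta:=-p\int_Z S_x^*(W_\gamma^\lambda)\,d\rho+\lambda\xi(u_\gamma^\lambda)=p\Big(\int_Z S_x^*(W_\gamma^\lambda)\,d\gamma-\int_Z S_x^*(W_\gamma^\lambda)\,d\rho\Big)
\end{equation*}
lies in $\partial\Phi_\rho(u_\gamma^\lambda)$, by the sum rule already used in Lemma \ref{lemma2}, or simply by adding the two subgradient inequalities.

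\emph{Step 2: a Bregman-type lower bound.} For any $v$ we have
\begin{equation*}
\Phi_\rho(v)-\Phi_\rho(u_\gamma^\lambda)-\langle v-u_\gamma^\lambda,\eta\rangle
=\big[\mathcal{E}_\rho(v)-\mathcal{E}_\rho(u_\gamma^\lambda)-\langle v-u_\gamma^\lambda,\partial\mathcal{E}_\rho(u_\gamma^\lambda)\rangle\big]+\lambda D_\Omega^{\xi(u_\gamma^\lambda)}(v,u_\gamma^\lambda).
\end{equation*}
The bracket is nonnegative by convexity of $\mathcal{E}_\rho$ (Lemma \ref{lemma1}). Take $v=u_\rho^\lambda$. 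Since $u_\rho^\lambda$ minimizes $\Phi_\rho$, we get $\Phi_\rho(u_\rho^\lambda)-\Phi_\rho(u_\gamma^\lambda)\le 0$. Hence
\begin{equation*}
\lambda D_\Omega^{\xi(u_\gamma^\lambda)}(u_\rho^\lambda,u_\gamma^\lambda)\le -\langle u_\rho^\lambda-u_\gamma^\lambda,\eta\rangle
= p\Big\langle u_\rho^\lambda-u_\gamma^\lambda,\int_Z S_x^*(W_\gamma^\lambda)\,d\rho-\int_Z S_x^*(W_\gamma^\lambda)\,d\gamma\Big\rangle .
\end{equation*}
Dividing by $p$ gives the claim.

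\emph{Expected obstacle.} The only delicate point is justifying that $\int_Z S_x^*(W_\gamma^\lambda)\,d\gamma$ and $\int_Z S_x^*(W_\gamma^\lambda)\,d\rho$ are well-defined elements of $B_1^*$, so that Lemmas \ref{lemma1} and \ref{lemma2} apply for a general $\gamma$, including the empirical measure. This follows from (A1): $\|S_x^*\|\le k$ uniformly in $x$, and $|W_\gamma^\lambda|\le (M+k\|u_\gamma^\lambda\|_{B_1})^{p-1}$ is bounded. The integrals can therefore be understood weak*, that is, pointwise on each $v\in B_1$, exactly as in the proof of Lemma \ref{lemma1}. An alternative to Step 2 is to add the subgradient inequality for $\Omega$ at $u_\rho^\lambda$ and use the monotonicity of $\partial\mathcal{E}_\rho$. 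The minimality argument above is more direct, since it needs only Lemma \ref{lemma2} for $\gamma$.
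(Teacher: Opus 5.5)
Your proof is correct and is essentially the paper's argument. The paper combines the same three ingredients: the subgradient inequality for $\mathcal{E}_\rho$ at $u_\gamma^\lambda$ from Lemma \ref{lemma1}, the minimality $\mathcal{E}_\rho(u_\rho^\lambda)+\lambda\Omega(u_\rho^\lambda)\le\mathcal{E}_\rho(u_\gamma^\lambda)+\lambda\Omega(u_\gamma^\lambda)$, and the representation $\lambda\xi(u_\gamma^\lambda)=p\int_Z S_x^*(W_\gamma^\lambda)\,d\gamma$ from Lemma \ref{lemma2}; it simply writes them as a chain of inequalities instead of packaging them through $\Phi_\rho$ and $\eta$. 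Your Step 1 claim that $\eta\in\partial\Phi_\rho(u_\gamma^\lambda)$ is never used in Step 2, which needs only the algebraic identity, the nonnegativity of the bracket, and minimality.
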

	\begin{proof}
By using Lemma \ref{lemma1} together with  \eqref{subd}, we can state that
\begin{align}\label{eq39}
	&\mathcal{E}_{\rho}\left(u_{\rho}^{\lambda}\right)-\mathcal{E}_{\rho}\left(u_{\gamma}^{\lambda}\right)
	\geq\notag\\
	&\Big\langle u_{\rho}^{\lambda}-u_{\gamma}^{\lambda},-p \int_{Z}S_x^*\Big( \left|y-(Au_{\gamma}^{\lambda})(x)\right|^{p-1}\text{sgn}[y-(Au_{\gamma}^{\lambda})(x)]\Big) d
	\rho\Big\rangle_{B_1\times
		B_1^*}.
\end{align}
Further, by the definitions of $u_{\rho}^{\lambda}$ and $u_{\gamma}^{\lambda}$, we can claim that
\begin{align*}
	&~\mathcal{E}_{\rho}\left(u_{\rho}^{\lambda}\right)+{\lambda}\Omega(u_{\rho}^{\lambda})\leq\mathcal{E}_{\rho}\left(u_{\gamma}^{\lambda}\right)+{\lambda}\Omega(u_{\gamma}^{\lambda})\\
	\Rightarrow & ~\mathcal{E}_{\rho}\left(u_{\rho}^{\lambda}\right)-\mathcal{E}_{\rho}\left(u_{\gamma}^{\lambda}\right)+{\lambda}\left(\Omega(u_{\rho}^{\lambda})-\Omega(u_{\gamma}^{\lambda})\right)\leq
	0.
\end{align*}
From \eqref{eq39}, the above inequality becomes
\begin{align*}
	 \Big\langle u_{\rho}^{\lambda}-u_{\gamma}^{\lambda},-p \int_{Z}S_x^*\Big( \left|y-(Au_{\gamma}^{\lambda})(x)\right|^{p-1}\text{sgn}[y-(&Au_{\gamma}^{\lambda})(x)]\Big) d
	\rho\Big\rangle_{B_1\times
		B_1^*} \\
	&
	+{\lambda}\left(\Omega(u_{\rho}^{\lambda})-\Omega(u_{\gamma}^{\lambda})\right)\leq 0
\end{align*}
Replace the term $\left|y-(Au_{\gamma}^{\lambda})(x)\right|^{p-1}\text{sgn}[y-(Au_{\gamma}^{\lambda})(x)]$ by $W_{\gamma}^{\lambda}(x,y)$    
  \begin{equation*}
	 p\Big\langle u_{\rho}^{\lambda}-u_{\gamma}^{\lambda},-\int_{Z}S_x^*(W_{\gamma}^{\lambda}(x,y) ) d \rho\Big\rangle_{B_1\times B_1^*}
	+{\lambda}\left(\Omega(u_{\rho}^{\lambda})-\Omega(u_{\gamma}^{\lambda})\right)\leq 0
\end{equation*}

\noindent Choose $\xi(u_{\gamma}^{\lambda})$ to be the subgradient in $(\partial\Omega)(u_{\gamma}^{\lambda})$ that satisfies lemma \ref{lemma2} with respect to the distribution $\gamma$. Then by \eqref{breg}, the inequality takes the form

\begin{align*}
	 & ~p\left\langle u_{\rho}^{\lambda}-u_{\gamma}^{\lambda},-\int_{Z}S_x^*(W_{\gamma}^{\lambda}(x,y) ) d \rho\right\rangle_{B_1\times B_1^*} \\
	&~~~~~~~~~ +p\left\langle u_{\rho}^{\lambda}-u_{\gamma}^{\lambda}, \int_{Z}S_x^*(W_{\gamma}^{\lambda}(x,y) ) d \gamma\right\rangle_{B_1\times B_1^*}
	+\lambda D_{\Omega}^{\xi(u_{\gamma}^{\lambda})}(u_{\rho}^{\lambda},u_{\gamma}^{\lambda})\leq 0.
\end{align*}
Finally, we rearrange the terms to get the following form
\begin{align*}
\frac{\lambda}{p} D_{\Omega}^{\xi(u_{\gamma}^{\lambda})}(u_{\rho}^{\lambda},u_{\gamma}^{\lambda}) \leq
	\Big\langle u_{\rho}^{\lambda}-u_{\gamma}^{\lambda}, \int_{Z}S_x^*(W_{\gamma}^{\lambda}(x,y) ) d \rho-\int_{Z}S_x^*(W_{\gamma}^{\lambda}(x,y) ) d \gamma \Big\rangle_{B_1\times
		B_1^*}
\end{align*}
This completes the proof.
\end{proof}
\vspace{0.3cm}

\begin{lemma}\label{lemma4}(cf. \cite{CuckerZhou2007}, Corollary $3.6$). Let $h$ be a random variable on a probability space $Z$ with mean
	$\mu$ and finite variance, and satisfying $|h(z)-\mu|\leq N$ for
	almost all $z\in Z$. Then for all $\epsilon>0$,
	\begin{equation*}
		\underset{z\in Z^m}{\text{Prob}}    ~\Big\{ \frac{1}{m}\sum_{i=1}^{m}h(z_i)-\mu\geq\epsilon\Big\}\leq \exp\Big\{-\frac{m\epsilon^2}{2N^2}\Big\}.
	\end{equation*}
\end{lemma}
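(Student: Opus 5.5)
This is the one-sided Bernstein/Hoeffding-type inequality of Cucker--Zhou. I would prove it by the Chernoff exponential-moment method. Write $\xi_i=h(z_i)-\mu$. The $\xi_i$ are i.i.d., have mean zero, and satisfy $|\xi_i|\le N$ almost surely. For any $t>0$, Markov's inequality applied to $\exp\{t\sum_i\xi_i\}$ gives
\begin{equation*}
\underset{z\in Z^m}{\text{Prob}}\Big\{\frac1m\sum_{i=1}^m\xi_i\ge\epsilon\Big\}\le e^{-tm\epsilon}\,\mathbb{E}\,e^{t\sum_i\xi_i}=e^{-tm\epsilon}\big(\mathbb{E}\,e^{t\xi_1}\big)^m ,
\end{equation*}
where the last equality uses independence and identical distribution.

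The key step is to bound the moment generating function of a single bounded, centered variable, namely $\mathbb{E}\,e^{t\xi_1}\le e^{t^2N^2/2}$. I would use Hoeffding's lemma. Since $\xi_1\in[-N,N]$ and $s\mapsto e^{ts}$ is convex, we have
\begin{equation*}
e^{t\xi_1}\le \frac{N-\xi_1}{2N}e^{-tN}+\frac{N+\xi_1}{2N}e^{tN}.
\end{equation*}
Taking expectations and using $\mathbb{E}\xi_1=0$ gives $\mathbb{E}\,e^{t\xi_1}\le\cosh(tN)$. Comparing Taylor series term by term, using $(2k)!\ge 2^k k!$, yields $\cosh(tN)\le e^{t^2N^2/2}$. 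This gives exactly the constant $2N^2$ in the exponent of the statement; the full Hoeffding lemma on an interval of length $2N$ would also give it.

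Combining the two steps, the tail is at most $\exp\{-tm\epsilon+mt^2N^2/2\}$. Minimizing over $t>0$ gives $t=\epsilon/N^2$, and the bound becomes $\exp\{-m\epsilon^2/(2N^2)\}$, as claimed.

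The only point needing care is the single-variable MGF bound. Going through $\cosh$ is the cleanest way, and it is where the hypothesis $|h-\mu|\le N$ is used. The finite-variance hypothesis is not needed for this version of the bound; it matters only for the sharper Bernstein form in the cited corollary. Measurability of $h(z_i)$ on $Z^m$ with the product measure is routine.
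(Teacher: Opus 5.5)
Your proof is correct. The Chernoff bound, the convexity estimate $\mathbb{E}\,e^{t\xi_1}\le\cosh(tN)$ (using $\mathbb{E}\,\xi_1=0$), the termwise comparison $\cosh x\le e^{x^2/2}$ via $(2k)!\ge 2^k k!$, and the choice $t=\epsilon/N^2$ together give exactly $\exp\{-m\epsilon^2/(2N^2)\}$.

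The paper gives no argument of its own; it simply imports the inequality by citation from Cucker--Zhou. Your route is the standard Hoeffding-lemma derivation, and it makes explicit which hypotheses are actually used. Only the almost-sure bound $|h-\mu|\le N$ and the independence of the $h(z_i)$, for a function $h$ fixed in advance, enter the argument. The finite-variance assumption in the statement is redundant, since it already follows from boundedness. The same argument also works verbatim for independent, non-identically distributed bounded summands. What the citation buys is brevity; the variance hypothesis would only matter for variance-sensitive (Bernstein-type) refinements, which the paper never uses.

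One cosmetic point: your convex-combination weights divide by $N$, so the degenerate case $N=0$ should be set aside. It is trivial, because then $h=\mu$ almost surely and the event has probability zero.
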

\vspace{0.1cm}

\noindent From the above lemma, it can be easily shown that 
\begin{equation}\label{cor}
	\underset{z\in Z^m}{\text{Prob}}    ~\Big\{\Big| \frac{1}{m}\sum_{i=1}^{m}h(z_i)-\mu\Big|\leq\epsilon\Big\}\geq 1-2\exp\Big\{-\frac{m\epsilon^2}{2N^2}\Big\}.
\end{equation}
\vspace{0.4cm}

	\noindent We are now ready to present the proof of theorem \ref{thm1}.

\begin{proof}[\textbf{Proof of theorem \ref{thm1}}:]
	The Bregman distance from $u_{\rho}$ to $u_{\gamma}^{\lambda}$ with respect to $\xi(u_{\gamma}^{\lambda})\in (\partial\Omega)(u_{\gamma}^{\lambda})$ is
	\begin{equation}
		D_{\Omega}^{\xi(u_{\gamma}^{\lambda})}(u_{\rho},u_{\gamma}^{\lambda})=\Omega(u_{\rho})-\Omega(u_{\gamma}^{\lambda})-\langle u_{\rho}-u_{\gamma}^{\lambda},\xi(u_{\gamma}^{\lambda})\rangle_{B_1\times B_1^*}.
	\end{equation}
	Add and subtract $\Omega(u_{\rho}^{\lambda})$ to the RHS
	\begin{align*}
		D_{\Omega}^{\xi(u_{\gamma}^{\lambda})}(u_{\rho},u_{\gamma}^{\lambda})=\Omega(u_{\rho})-\Omega(u_{\rho}^{\lambda})+\Omega(u_{\rho}^{\lambda})-\Omega(u_{\gamma}^{\lambda})-\langle u_{\rho}-u_{\gamma}^{\lambda},\xi(u_{\gamma}^{\lambda})\rangle_{B_1\times B_1^*}.
	\end{align*}
Let $\zeta(u_{\rho}^{\lambda})$ be some element $(\partial \Omega)(u_{\rho}^{\lambda})$. Substitute $\Omega(u_{\rho}^{\lambda})-\Omega(u_{\gamma}^{\lambda})$ and $\Omega(u_{\rho})-\Omega(u_{\rho}^{\lambda})$ by the Bregman distances with respect to $\xi(u_{\gamma}^{\lambda})$ and $\zeta(u_{\rho}^{\lambda})$, respectively, to obtain

 \begin{align*}
		D_{\Omega}^{\xi(u_{\gamma}^{\lambda})}(u_{\rho},u_{\gamma}^{\lambda})=	D_{\Omega}^{\zeta(u_{\rho}^{\lambda})}(u_{\rho},u_{\rho}^{\lambda})+&\langle u_{\rho}-u_{\rho}^{\lambda},\zeta(u_{\rho}^{\lambda})\rangle_{B_1\times B_1^*}+D_{\Omega}^{\xi(u_{\gamma}^{\lambda})}(u_{\rho}^{\lambda},u_{\gamma}^{\lambda})\\
		&
	    \langle  u_{\rho}^{\lambda}-u_{\gamma}^{\lambda},\xi(u_{\gamma}^{\lambda})\rangle_{B_1\times B_1^*}-\langle u_{\rho}-u_{\gamma}^{\lambda},\xi(u_{\gamma}^{\lambda})\rangle_{B_1\times B_1^*}.
	\end{align*}
	which, after simplification will yield
	\begin{align*}
		D_{\Omega}^{\xi(u_{\gamma}^{\lambda})}(u_{\rho},u_{\gamma}^{\lambda})=	D_{\Omega}^{\zeta(u_{\rho}^{\lambda})}(u_{\rho},u_{\rho}^{\lambda})+D_{\Omega}^{\xi(u_{\gamma}^{\lambda})}(u_{\rho}^{\lambda},u_{\gamma}^{\lambda})+\langle  u_{\rho}-u_{\rho}^{\lambda},\zeta(u_{\rho}^{\lambda})-\xi(u_{\gamma}^{\lambda})\rangle_{B_1\times B_1^*}
	\end{align*}
From \eqref{ineq}, we deduce that for any $\tau(u_{\rho})\in\partial\Omega(u_{\rho})$
	
		\begin{align*}
		D_{\Omega}^{\xi(u_{\gamma}^{\lambda})}(u_{\rho},u_{\gamma}^{\lambda})\leq	\|u_{\rho}-u_{\rho}^{\lambda}\|_{B_1}\|\tau(u_{\rho})-&\zeta(u_{\rho}^{\lambda})\|_{B_1^*}+D_{\Omega}^{\xi(u_{\gamma}^{\lambda})}(u_{\rho}^{\lambda},u_{\gamma}^{\lambda})\\
		&+\langle  u_{\rho}-u_{\rho}^{\lambda},\zeta(u_{\rho}^{\lambda})-\xi(u_{\gamma}^{\lambda})\rangle_{B_1\times B_1^*}.
	\end{align*}
	Fix such a $\tau(u_{\rho})$. Let $R_{\lambda}^{\rho}$ and $M_{\lambda}^{\rho,\gamma}$ be such that $\|\tau(u_{\rho})-\zeta(u_{\rho}^{\lambda})\|_{B_1^*}\leq R_{\lambda}^{\rho}$ and $\|\zeta(u_{\rho}^{\lambda})-\xi(u_{\gamma}^{\lambda})\|_{B_1^*}\leq M_{\lambda}^{\rho,\gamma}$. Then the LHS can be bounded in the following way,
		\begin{align*}
		D_{\Omega}^{\xi(u_{\gamma}^{\lambda})}(u_{\rho},u_{\gamma}^{\lambda})\leq	\|u_{\rho}-u_{\rho}^{\lambda}\|_{B_1}R_{\lambda}^{\rho}+D_{\Omega}^{\xi(u_{\gamma}^{\lambda})}(u_{\rho}^{\lambda},u_{\gamma}^{\lambda})
		&+\| u_{\rho}-u_{\rho}^{\lambda}\|_{B_1}M_{\lambda}^{\rho,\gamma}
	\end{align*}
	Let's consider $\gamma$ to be the empirical measure $\gamma_z$. Then $u_{\gamma}^{\lambda}$ coincides with $u_{z}^{\lambda}$. Further, we will drop the notation ${\rho,\gamma_z}$ for $M_{\lambda}^{\rho,\gamma_z}$ and $R_{\lambda}^{\rho}$ since both the measure $\rho$ and $\gamma_z$ will stay invariant throughout the paper. We now have the inequality
	\begin{align*}
	D_{\Omega}^{\xi(u_{z}^{\lambda})}(u_{\rho},u_{z}^{\lambda})-\|u_{\rho}-u_{\rho}^{\lambda}\|_{B_1}(M_{\lambda}+R_{\lambda})\leq	D_{\Omega}^{\xi(u_{z}^{\lambda})}(u_{\rho}^{\lambda},u_{z}^{\lambda})
\end{align*}
By assumption $2$, we get $-\|u_{\rho}-u_{\rho}^{\lambda}\|_{B_1}\geq-c_{\beta}\lambda^{\beta}$, and hence
	\begin{align}\label{ineq19}
	D_{\Omega}^{\xi(u_{z}^{\lambda})}(u_{\rho},u_{z}^{\lambda})-(M_{\lambda}+R_{\lambda})c_{\beta}\lambda^{\beta}\leq	D_{\Omega}^{\xi(u_{z}^{\lambda})}(u_{\rho}^{\lambda},u_{z}^{\lambda})
\end{align}
To prove the theorem, we need to find an upper bound for the RHS. For that we use lemma \ref{lemma3}. Replace the measure $\gamma$ with the empirical measure $\gamma_z$ in lemma \ref{lemma3} to obtain

\begin{align*}
	 D_{\Omega}^{\xi(u_{z}^{\lambda})}(u_{\rho}^{\lambda},u_{z}^{\lambda}) \leq
	\frac{p}{\lambda}\Big\langle u_{\rho}^{\lambda}-u_{z}^{\lambda}, \int_{Z}S_x^*(W_{\gamma_z}^{\lambda}(x,y) ) d \rho-\frac{1}{m}\sum_{i=1}^{m}S_{x_i}^*(W_{\gamma_z}^{\lambda}(x_i,y_i) )\Big\rangle_{B_1\times
		B_1^*}
\end{align*}
Simplify the terms to get
\begin{align*}
   D_{\Omega}^{\xi(u_{z}^{\lambda})}(u_{\rho}^{\lambda},u_{z}^{\lambda}) \leq
	\int_{Z}\frac{p}{\lambda}\Big\langle u_{\rho}^{\lambda}-u_{z}^{\lambda}, &S_x^*(W_{\gamma_z}^{\lambda}(x,y) ) \Big\rangle_{\small{B_1\times
			B_1^*}}d \rho\\
	&-\frac{1}{m}\sum_{i=1}^{m}\frac{p}{\lambda}\Big\langle u_{\rho}^{\lambda}-u_{z}^{\lambda},S_{x_i}^*(W_{\gamma_z}^{\lambda}(x_i,y_i) ) \Big\rangle_{\small{B_1\times
			B_1^*}}
\end{align*}
which reduces to the form
\begin{align*}
D_{\Omega}^{\xi(u_{z}^{\lambda})}(u_{\rho}^{\lambda},u_{z}^{\lambda}) \leq& \int_{Z}h(x,y) d \rho-\frac{1}{m}\sum_{i=1}^{m}h(x_i,y_i)\\
\leq&\left|\int_{Z}h(x,y) d \rho-\frac{1}{m}\sum_{i=1}^{m}h(x_i,y_i)\right| .
\end{align*}
where we used $h(x,y)$ for $\frac{p}{\lambda}\langle u_{\rho}^{\lambda}-u_{z}^{\lambda}, S_x^*(W_{\gamma_z}^{\lambda}(x,y) ) \rangle_{\small{B_1\times
		B_1^*}}$. Hence, from \eqref{ineq19} it follows that
\begin{equation}\label{ineq20}
		D_{\Omega}^{\xi(u_{z}^{\lambda})}(u_{\rho},u_{z}^{\lambda})-(M_{\lambda}+R_{\lambda})c_{\beta}\lambda^{\beta}\leq \left|\int_{Z}h(x,y) d \rho-\frac{1}{m}\sum_{i=1}^{m}h(x_i,y_i)\right| .
\end{equation}

We will apply lemma \ref{lemma4} to the expression on the right. For that we need to show $|h(x,y)-\int h(x,y)d\rho|$ is bounded. Consider
\begin{align*}
	|h(x,y)|=&\left|\frac{p}{\lambda}\langle u_{\rho}^{\lambda}-u_{z}^{\lambda}, S_x^*(W_{\gamma_z}^{\lambda}(x,y) ) \rangle_{\small{B_1\times B_1^*}} \right|\\
	\leq&\frac{p}{\lambda}|S_x(u_{\rho}^{\lambda}-u_{z}^{\lambda})|\left|y-(Au_{z}^{\lambda})(x)\right|^{p-1}|\text{sgn}[y-(Au_{z}^{\lambda})(x)]|\\
	\leq &\frac{pk}{\lambda}\|u_{\rho}^{\lambda}-u_{z}^{\lambda}\|_{B_1}\left|y-(Au_{z}^{\lambda})(x)\right|^{p-1}\\
	\leq&\frac{pk}{\lambda}(\|u_{\rho}^{\lambda}\|_{B_1}+\|u_{z}^{\lambda}\|_{B_1})[M+k\|u_z^{\lambda}\|_{B_1}]^{p-1}
	\leq\frac{p\omega_{\lambda}'}{\lambda}
\end{align*}
Where we denote $k(\|u_{\rho}^{\lambda}\|_{B_1}+\|u_{z}^{\lambda}\|_{B_1})[M+k\|u_z^{\lambda}\|_{B_1}]^{p-1}$ by $\omega_{\lambda}'$. Then the following inequality follows
\begin{equation*}
	|h(x,y)|\leq\frac{p\omega_{\lambda}'}{\lambda}.
\end{equation*}
Therefore, we have, $|h(x,y)-\int h(x,y)d\rho|\leq |h(x,y)|+\int|h(x,y)|d\rho\leq \frac{p\omega_{\lambda}}{\lambda}$, where $\omega_{\lambda}$ denotes $2\omega_{\lambda}'$. This shows that the required term is bounded for each ${\lambda}$. Then, lemma \ref{lemma4} gives us
\begin{align}\label{ineq21}
	\underset{z\in Z^m}{\text{Prob}}\left\{\left|\int_Z h(x,y)d\rho-\frac{1}{m}\sum_{i=1}^{m}h(x_i,y_i)\right|\leq\epsilon\right\}\geq 1-2\exp\left\{-\frac{m\lambda^2\epsilon^2}{2p^2\omega_{\lambda}^2}\right\}
	\end{align}
Now \eqref{ineq20} tells us that
\begin{align*}
	\{z\in Z^m:D_{\Omega}^{\xi(u_{z}^{\lambda})}(u_{\rho},u_{z}^{\lambda})-(M_{\lambda}+R_{\lambda})c_{\beta}\lambda^{\beta}\leq\epsilon\}\supseteq\\
	\left\{z\in Z^m:\left|\int_Z h(x,y)d\rho-\frac{1}{m}\sum_{i=1}^{m}h(x_i,y_i)\right|\leq\epsilon\right\}
\end{align*}
In terms of measure, this would imply that
\begin{align*}
	\underset{z\in Z^m}{Prob}~\{D_{\Omega}^{\xi(u_{z}^{\lambda})}(u_{\rho},u_{z}^{\lambda})-&(M_{\lambda}+R_{\lambda})c_{\beta}\lambda^{\beta}\leq\epsilon\}~\geq~\\
	&\underset{z\in Z^m}{\text{Prob}}\left\{\left|\int_Z h(x,y)d\rho-\frac{1}{m}\sum_{i=1}^{m}h(x_i,y_i)\right|\leq\epsilon\right\}
\end{align*}
Hence, by \eqref{ineq21}, the theorem is complete.
\begin{align*}
	\underset{z\in Z^m}{Prob}\left\{D_{\Omega}^{\xi(u_{z}^{\lambda})}(u_{\rho},u_{z}^{\lambda})\leq(M_{\lambda}+R_{\lambda})c_{\beta}\lambda^{\beta}+\epsilon\right\}\geq 1-2\exp\left\{-\frac{m \lambda^2\epsilon^2}{2p^2\omega_{\lambda}^2}\right\}
\end{align*}
Hence proved.
\end{proof}	
\vspace{0.5cm}
\noindent Now we go on to prove the next theorem for the case $\Omega(u)=\|u\|_{B_1}$.

\begin{proof}[\textbf{Proof of theorem \ref{thm2}:}]
	From the previous theorem, we know that
	\begin{align*}
		\underset{z\in Z^m}{Prob}\left\{D_{\Omega}^{\xi(u_{z}^{\lambda})}(u_{\rho},u_{z}^{\lambda})\leq(M_{\lambda}+R_{\lambda})c_{\beta}\lambda^{\beta}+\epsilon\right\}\geq 1-2\exp\left\{-\frac{m \lambda^2\epsilon^2}{2p^2\omega_{\lambda}^2}\right\}
	\end{align*}
	We substitute $\delta$ for $2\exp\left\{-\frac{m \lambda^2\epsilon^2}{2p^2\omega_{\lambda}^2}\right\}$, so $\epsilon$ can be written as $\frac{\sqrt{2\ln(2/\delta)}p\omega_{\lambda}}{\lambda\sqrt{m}}$. Then the above inequality will be
	\begin{align*}
		\underset{z\in Z^m}{Prob}\left\{D_{\Omega}^{\xi(u_{z}^{\lambda})}(u_{\rho},u_{z}^{\lambda})\leq(M_{\lambda}+R_{\lambda})c_{\beta}\lambda^{\beta}+\frac{\sqrt{2\ln(2/\delta)}p\omega_{\lambda}}{\lambda\sqrt{m}}\right\}\geq 1-\delta
	\end{align*}
	This proves the first part of the theorem. Now consider $\Omega(u)=\|u\|_{B_1}$. Note that even though $\Omega$ is not strictly convex, the strict convexity of $\mathcal{E}_{\rho}$ makes the sum $\mathcal{E}_{\rho}+\Omega$ strictly convex. Hence, the minimizers of \ref{reg} and \ref{regtrue} are well defined and exist uniquely.
	We need to find $R_\lambda$ and $M_{\lambda}$ such that  $\|\tau(u_{\rho})-\zeta(u_{\rho}^{\lambda})\|_{B_1^*}\leq R_{\lambda}$ and $\|\zeta(u_{\rho}^{\lambda})-\xi(u_{z}^{\lambda})\|_{B_1^*}\leq M_{\lambda}$, where $\tau(u_{\rho})\in\partial\Omega(u_{\rho})$ and $\zeta(u_{\rho}^{\lambda})\in\partial\Omega(u_{\rho}^{\lambda})$. We note that
	\begin{equation*}
		\|\tau(u_{\rho})-\zeta(u_{\rho}^{\lambda})\|_{B_1^*}\leq \|\tau(u_{\rho})\|_{B_1^*}+\|\zeta(u_{\rho}^{\lambda})\|_{B_1^*}
	\end{equation*}
	and
	\begin{equation*}
		\|\zeta(u_{\rho}^{\lambda})-\xi(u_{z}^{\lambda})\|_{B_1^*}\leq \|\zeta(u_{\rho}^{\lambda})\|_{B_1^*}+\|\xi(u_{z}^{\lambda})\|_{B_1^*}
	\end{equation*}
	From \cite{schuster2012regularization}, theorem $2.28$, we realize that $\|\tau(u_{\rho})\|_{B_1^*}=\|\zeta(u_{\rho}^{\lambda})\|_{B_1^*}=\|\xi(u_{z}^{\lambda})\|_{B_1^*}=1$, and therefore $M_{\lambda}=R_{\lambda}=2$ is an appropriate choice. Then from \eqref{ineq22}, with a confidence of $1-\delta$, the following statement holds.
	\begin{equation}\label{ineq23}
		D_{\Omega}^{\xi(u_{z}^{\lambda})}(u_{\rho},u_{z}^{\lambda})\leq4c_{\beta}\lambda^{\beta}+\frac{\sqrt{2\ln(2/\delta)}p\omega_{\lambda}}{\lambda\sqrt{m}}
	\end{equation}
	Now we need to find a value for $\omega_{\lambda}$ such that
\begin{equation*}
2k(\|u_{\rho}^{\lambda}\|_{B_1}+\|u_{z}^{\lambda}\|_{B_1})[M+k\|u_z^{\lambda}\|_{B_1}]^{p-1}
\leq\omega_{\lambda}
\end{equation*}
Compare \eqref{true} and \eqref{regtrue}. Since $\mathcal{E}_{\rho}(u_{\rho})\leq\mathcal{E}_{\rho}(u_{\rho}^{\lambda})$ it is necessary that $\|u_{\rho}^{\lambda}\|_{B_1}\leq\|u_{\rho}\|_{B_1}$. Also, from \ref{reg}, we get
\begin{equation*}
\left(\frac{1}{m}\sum_{i=1}^m|y_i-(Au)(x_i)|^p+\lambda\|u_z^{\lambda}\|_{B_1}\right)\leq \left(\frac{1}{m}\sum_{i=1}^m|y_i-(A(0))(x_i)|^p+\lambda\|0\|_{B_1}\right)
\end{equation*}
which will produce the following bound
\begin{equation*}
	\lambda\|u_z^{\lambda}\|_{B_1}\leq \frac{1}{m}\sum_{i=1}^m|y_i|^p\leq M^p
\end{equation*}
Then, by using the fact that $\|u_{\rho}^{\lambda}\|_{B_1}\leq\|u_{\rho}\|_{B_1}$ and $\|u_z^{\lambda}\|_{B_1}\leq \frac{M^p}{\lambda}$, we can argue that
\begin{align*}
	2k(\|u_{\rho}^{\lambda}\|_{B_1}+\|u_{z}^{\lambda}\|_{B_1})[M+k\|u_z^{\lambda}\|_{B_1}]^{p-1}
	&\leq 2\left(k\|u_{\rho}\|_{B_1}+\frac{kM^p}{\lambda}\right)\left[M+\frac{kM^p}{\lambda}\right]^{p-1}\\
	&\leq 2\left[\sigma_{\max}+\frac{kM^p}{\lambda}\right]^{p}=\omega_{\lambda}
\end{align*}
where $\sigma_{\max}$ is used to denote $\max\{k\|u_{\rho}\|_{B_1},M\}$. Hence, this value qualifies as an upper bound and can be used for $\omega_{\lambda}$.
By \eqref{ineq23}, we say that with a confidence of $1-\delta$, the following inequality is true.
\begin{equation*}
		D_{\Omega}^{\xi(u_{z}^{\lambda})}(u_{\rho},u_{z}^{\lambda})\leq4c_{\beta}\lambda^{\beta}+{2p{\sqrt{2\ln(2/\delta)}}}~\frac{\left[\lambda\sigma_{\max}+kM^p\right]^{p}}{\lambda^{p+1}\sqrt{m}}
\end{equation*}
Hence proved.
\end{proof}
\vspace{0.3cm}
\noindent We will proceed to prove the corollary.
\begin{proof}[\textbf{Proof of corollary \ref{cor3}:}]
	We have with a confidence of $1-\delta$ that
	\begin{equation*}
		D_{\Omega}^{\xi(u_{z}^{\lambda})}(u_{\rho},u_{z}^{\lambda})\leq4c_{\beta}\lambda^{\beta}+{2p{\sqrt{2\ln(2/\delta)}}}~\frac{\left[\lambda\sigma_{\max}+kM^p\right]^{p}}{\lambda^{p+1}\sqrt{m}}
	\end{equation*}
	Substitute $\lambda=\frac{1}{m^{\alpha}}$ for some $\alpha>0$. This will give us
		\begin{equation*}
		D_{\Omega}^{\xi(u_{z}^{\lambda})}(u_{\rho},u_{z}^{\lambda})\leq\frac{4c_{\beta}}{m^{\alpha\beta}}+2p\sqrt{2\ln(2/\delta)}\frac{\left[\frac{\sigma_{\max}}{{m^{\alpha}}}+kM^p\right]^{p}m^{\alpha(p+1)}}{\sqrt{m}}
	\end{equation*}
	If $m$ is large enough, then the term $\left[\frac{\sigma_{\max}}{{m^{\alpha}}}+kM^p\right]^{p}\approx \left[kM^p\right]^{p}$ and we have
	
	\begin{equation*}
		D_{\Omega}^{\xi(u_{z}^{\lambda})}(u_{\rho},u_{z}^{\lambda})\leq\frac{4c_{\beta}}{m^{\alpha\beta}}+\frac{2p\sqrt{2\ln(2/\delta)}\left(kM^p\right)^{p}}{m^{1/2-\alpha p-\alpha}}
	\end{equation*}
	Now we use $\alpha=\frac{1}{2(\beta+p+1)}$ as given by \eqref{alp}, which, after simplification will yield the following inequality
			\begin{equation*}
		D_{\Omega}^{\xi(u_{z}^{\lambda})}(u_{\rho},u_{z}^{\lambda})\leq{4c_{\beta}}{m^{-\frac{\beta}{2(\beta+p+1)}}}+{2p\sqrt{2\ln(2/\delta)}\left(kM^p\right)^{p}}{m^{-\frac{\beta}{2(\beta+p+1)}}}
	\end{equation*}
	From which we can conclude with a confidence of $1-\delta$ that
	\begin{equation*}
		D_{\Omega}^{\xi(u_{z}^{\lambda})}(u_{\rho},u_{z}^{\lambda})=O(m^{-\frac{\beta}{2(\beta+p+1)}})= O(\lambda^{\beta})
	\end{equation*}
	Hence proved.
\end{proof}
\vspace{0.3cm}
\noindent The proof for the final theorem can now be established.
\begin{proof}[\textbf{Proof of theorem \ref{thm3}:}]
		From \cite{JOSEPH2026102050}, the minimizers $u_{\rho}^{\lambda}$ and $u_z^{\lambda}$ are well defined.
	Now in lemma \ref{lemma3}, by interchanging $\gamma$ and $\rho$, we get
	\begin{align*}
		\frac{\lambda}{p} D_{\Omega}^{\xi(u_{\rho}^{\lambda})}(u_{\gamma}^{\lambda},u_{\rho}^{\lambda}) \leq
		\Big\langle u_{\gamma}^{\lambda}-u_{\rho}^{\lambda}, \int_{Z}S_x^*(W_{\rho}^{\lambda}(x,y) ) d \gamma-\int_{Z}S_x^*(W_{\rho}^{\lambda}(x,y) ) d \rho \Big\rangle_{B_1\times
			B_1^*}
	\end{align*}
We combine with \eqref{qcon2} and use the empirical distribution $\gamma=\gamma_z$ to get
		\begin{align*}
		\frac{\lambda c_q}{p} \|u_{z}^{\lambda}-u_{\rho}^{\lambda}\|_{B_1}^q \leq
		\Big\langle u_{z}^{\lambda}-u_{\rho}^{\lambda}, \frac{1}{m}\sum_{i=1}^{m}S_{x_i}^*(W_{\rho}^{\lambda}(x_i,y_i) )-\int_{Z}S_x^*(W_{\rho}^{\lambda}(x,y) ) d \rho \Big\rangle_{B_1\times
			B_1^*}
	\end{align*}
	We can now conclude the following
\begin{align}\label{ineq24}
	\frac{\lambda c_q}{p} \|u_{z}^{\lambda}-u_{\rho}^{\lambda}\|_{B_1}^{q-1} \leq
	\left\| \int_{Z}S_x^*(W_{\rho}^{\lambda}(x,y) ) d \rho -\frac{1}{m}\sum_{i=1}^{m}S_{x_i}^*(W_{\rho}^{\lambda}(x_i,y_i) )\right\|_{
		B_1^*}
\end{align}
			Since the space $B_1$ is uniformly convex, and hence reflexive, there exists some $u\in B_1$ with $\|u\|_{B_1}\leq1$ such that
	\begin{align*}
		&\left\| \int_{Z}S_x^*(W_{\rho}^{\lambda}(x,y) ) d \rho -\frac{1}{m}\sum_{i=1}^{m}S_{x_i}^*(W_{\rho}^{\lambda}(x_i,y_i) )\right\|_{
		B_1^*}\\
		& = \Big| \Big\langle u, \int_{Z}S_x^*(W_{\rho}^{\lambda}(x,y) ) d \rho -\frac{1}{m}\sum_{i=1}^{m}S_{x_i}^*(W_{\rho}^{\lambda}(x_i,y_i) ) \Big\rangle_{B_1\times B_1^*}
		\Big|\\
				&= \Big| \int_{Z} W_{\rho}^{\lambda}(x,y) (Au)(x)\, d\rho - \frac{1}{m} \sum_{i=1}^{m} W_{\rho}^{\lambda}(x_i,y_i) (Au)(x_i)
				\Big|\\
				& = \left|\int_{Z} h(z) d \rho-\frac{1}{m} \sum_{i=1}^{m}
				h\left(z_{i}\right)\right|
			\end{align*}
			where we used the notation $h(z)=h(x,y):=W_{\rho}^{\lambda}(x,y)(Au)(x)$.\\
			
			\noindent	Thus, from \eqref{ineq24} we obtain
			\begin{equation*}
				\frac{\lambda c_q}{p}\left\|u_{z}^{\lambda}-u_{\rho}^{\lambda}\right\|_{B_1}^{q-1} \leq\left|\int_{Z} h(z) d \rho-\frac{1}{m} \sum_{i=1}^{m}
				h\left(z_{i}\right)\right|
			\end{equation*}

			\vspace{0.2cm}
			\noindent Using the same argument as before, we can show that
			\begin{align}\label{ineq25}
				\underset{z\in Z^m}{\text{Prob}}\left\{\frac{\lambda c_{q}}{p}\left\|u_{z}^{\lambda}-u_{\rho}^{\lambda}\right\|_{{B_1}}^{q-1} \leq \epsilon\right\}
				&\geq \underset{z\in Z^m}{\text{Prob}}\left\{\left|\int_{Z} h(z) d \rho-\frac{1}{m} \sum_{i=1}^{m}
				h\left(z_{i}\right)\right|\leq
				\epsilon\right\}
			\end{align}
			
		\noindent Now, to use \eqref{cor}, we first show that $|h(z)|$ is bounded. We see that
			\begin{align*}
				|h(z)|&\leq(|y|+\left|(Au_{\rho}^{\lambda})(x)|\right)^{p-1}k\n{u}_{B_1}\\
				&\leq(M+k\|u_{\rho}^{\lambda}\|_{B_1})^{p-1}k\cdot1\\
				&\leq(M+k\|u_{\rho}\|_{B_1})^{p-1}k.
			\end{align*}
			Again, since $|h(z)-\int_Z h(z)d\rho|\leq |h(z)|+\int_Z |h(z)|d\rho\leq2(M+k\|u_{\rho}\|_{B_1})^{p-1}k$, hence by lemma \ref{lemma4} we get
			\begin{equation*}
				\underset{z\in Z^m}{\text{Prob}}\left\{\left|\int_{Z} h(z) d \rho-\frac{1}{m} \sum_{i=1}^{m}
				h\left(z_{i}\right)\right|\leq
				\epsilon\right\}\geq 1-2\exp\Big\{-\frac{m\epsilon^2}{2\omega^2}\Big\}
			\end{equation*}
	where $\omega=2(M+k\|u_{\rho}\|_{B_1})^{p-1}k$, a constant independent of $\lambda$. Hence, combining with \eqref{ineq25}, we claim
\begin{equation*}
	\underset{z\in Z^m}{\text{Prob}}\left\{\left\|u_z^{\lambda}-u_{\rho}^{\lambda}\right\|_{{B_1}} \leq
	\left(\frac{p\epsilon}{\lambda c_q}\right)^{\frac{1}{q-1}}\right\} \geq 1-2 \exp \left(-\frac{m \epsilon^{2}}{2
		\omega^{2}}\right)
\end{equation*}
This is analogous to
\begin{align}\label{ineq26}
	\underset{z\in Z^m}{\text{Prob}}\left\{\left\|u_z^{\lambda}-u_{\rho}^{\lambda}\right\|_{B_1} \leq \epsilon\right\}~\geq 1- 2 \exp \left(-\frac{m \lambda^{2} c_{q}^{2} \epsilon^{2(q-1)}}{2p^2
		\omega^{2}}\right)
\end{align}
		Now we use triangle inequality to establish the final part of the proof. From assumption $2$, we have the following.
		\begin{align*}
		\n{u_z^{\lambda}-\p u}_{B_1}&\leq\n{u_z^{\lambda}-u_{\rho}^{\lambda}}_{B_1}+\n{u_{\rho}^{\lambda}-\p u}_{B_1}\\
		\Rightarrow\n{u_z^{\lambda}-\p
			u}_{B_1}&\leq\n{u_z^{\lambda}-u_{\rho}^{\lambda}}_{B_1}+c_{\beta}{\lambda}^{\beta}
	\end{align*}
	This gives us
	\begin{equation*}
		\left(\left\|u_z^{\lambda}-u_{\rho}\right\|_{B_1}-c_{\beta}{\lambda}^{\beta}\right)
		\leq\left\|u_z^{\lambda}-u_{\rho}^{\lambda}\right\|_{B_1}
	\end{equation*}
From the above expression, we conclude, as we did earlier, that for any $\epsilon>0$
	\begin{align}
		\underset{z\in Z^m}{\text{Prob}}\left\{\left\|u_z^{\lambda}-u_{\rho}\right\|_{B_1}-c_{\beta}{\lambda}^{\beta}\leq\epsilon\right\} \geq     \underset{z\in Z^m}{\text{Prob}}\left\{\left\|u_z^{\lambda}-u_{\rho}^{\lambda}\right\|_{B_1}\leq\epsilon\right\}
	\end{align}
	Then by \eqref{ineq26}, we prove the statement of the theorem.		
	\begin{align*}
	\underset{z\in Z^m}{\text{Prob}}\left\{\left\|u_z^{\lambda}-u_{\rho}\right\|_{B_1}\leq c_{\beta}{\lambda}^{\beta}+\epsilon\right\} 
		\geq 1-2 \exp \left(-\frac{m \lambda^{2} c_{q}^{2} \epsilon^{2(q-1)}}{2p^2
			\omega^{2}}\right)
	\end{align*}
	
\end{proof}
\vspace{0.3cm}
\begin{proof}[\textbf{Proof of corollary \ref{cor2}:}]
	From the statement of the theorem \ref{thm3}, take
	\begin{equation*}
		2 \exp \left(-\frac{m \lambda^{2} c_{q}^{2} \epsilon^{2(q-1)}}{2p^2
			\omega^{2}}\right)=\delta,
	\end{equation*}
	Then $\epsilon$ will have the form given by
	\begin{equation*}
		\epsilon=\left(\frac{2p^2\omega^2\ln(2/\delta)}{c_q^2m\lambda^2}\right)^{\frac{1}{2(q-1)}}
	\end{equation*}
	Then \eqref{ineq27} becomes
	
		\begin{equation*}
		\underset{z\in Z^m}{\text{Prob}}  \left\{\left\|u_z^{\lambda}-u_{\rho}\right\|_{B_1} \leq  \sqrt[2(q-1)]{\frac{2p^2\omega^2\ln(2/\delta)}{c_q^2}}\left(\frac{1}{\lambda\sqrt{m}}\right)^{\frac{1}{(q-1)}}+c_{\beta}{\lambda}^{\beta}\right\} \geq 1-\delta
	\end{equation*}
	
	\noindent For simplicity, let's use the following notation
	
	\begin{equation*}
	\sqrt[2(q-1)]{\frac{2p^2\omega^2\ln(2/\delta)}{c_q^2}}=a_{\delta}
	\end{equation*}
	In addition, replace $\lambda=m^{-\frac{1}{2(1+\beta(q-1))}}$ as given and simplify the terms, we then have
	
\begin{equation*}
		\underset{z\in Z^m}{\text{Prob}}  \left\{\left\|u_z^{\lambda}-u_{\rho}\right\|_{B_1} \leq  	a_{\delta}{m}^{-\frac{\beta}{2(1+\beta(q-1))}}+c_{\beta}{{m^{-\frac{\beta}{2(1+\beta(q-1))}}}}\right\} \geq 1-\delta
	\end{equation*}
	That is,
	
	\begin{equation*}
		\underset{z\in Z^m}{\text{Prob}}  \left\{\left\|u_z^{\lambda}-u_{\rho}\right\|_{B_1} \leq  	(a_{\delta}+c_{\beta}){m}^{-\frac{\beta}{2(1+\beta(q-1))}}\right\} \geq 1-\delta
	\end{equation*}
That is for any fixed $\delta\in(0,1)$, we can say with a confidence of at least $1-\delta$ that

 	\begin{equation*}
  \left\|u_z^{\lambda}-u_{\rho}\right\|_{B_1} = O\left( 	{m}^{-\frac{\beta}{2(1+\beta(q-1))}}\right)
 \end{equation*}
	Hence the proof.
\end{proof}

\section{Numerical illustration}\label{sec5}
In this section we provide an application of theorem \ref{thm2}. We consider the input space $B_1$ to be $l^1$, the set of absolutely summable real sequences. Let for all
$i\in\mathbb{N}$, $\phi_i\in C[-1,1]$ denote the Legendre polynomial of degree $i-1$. We assume that each $\phi_i$ is standardized such that $\|\phi_i\|_{\infty}\leq1$, for all $i$. Consider the sequence $(\alpha_1,\alpha_2,...)$ that converges to $0$. We then define the operator $A:l^1\to C[-1,1]\subset L^2[-1,1]$ as
\begin{equation}
	Au=\sum_{i=1}^{\infty}\alpha_i u_i\phi_{i}
\end{equation}
where $u=(u_1,u_2,...)$ is an element of $l^1$. The above equation is well defined since for each $u$, $$(Au)(x)\leq\left|\sum_{i=1}^{\infty}\alpha_i u_i\phi_{i}(x)\right|\leq \max\{|\alpha_{i}|\}\sum_{i=1}^{\infty}|u_i|=k\|u\|_{l^1}<\infty.$$
Thus, assumption A1 is satisfied with $k=\max\{|\alpha_{i}|\}$. 
To show that the image lies in $C[-1,1]$, consider 
 $$\left|(Au)(x)-\sum_{i=1}^{N}\alpha_i u_i\phi_i(x)\right|=\left|\sum_{i=N+1}^{\infty}\alpha_i u_i\phi_i(x)\right|\leq \max\{|\alpha_{i}|\}\sum_{i=N+1}^{\infty}\left|u_i\right|$$
 The last term goes to zero independent of the point $x$. This implies that each $Au$ is the uniform limit of continuous functions; hence, $Au$ is continuous for all $u\in l^1$ as required.\\

 Consider $c_0$, the set of real sequences that converge to zero. Recall that $c_0^*$ is isometric to $l^1$. Let $W$ denote the weak* topology on $l^1$ induced by $c_0$, that is, $W=\sigma(l^1,c_0)$. 
 
 To prove that $S_x$ is weak* lower semi-continuous for all $x\in [-1,1]$, it is enough to show that it is weak* continuous. That is for any sequence $\{u_n\}_{n\in\mathbb{N}}\subset l^1$ such that $u_n\overset{*}{\rightharpoonup}u$ for some $u\in l^1$, it must imply that $S_x(u_n)\rightarrow S_x(u)$. Let $u_n$ be such that $u_n\overset{*}{\rightharpoonup}u$. Then by the definition of weak* convergence it must follow that $\langle b,u_n\rangle_{c_0\times l^1}\to \langle b,u\rangle_{c_0\times l^1}$ for all $b=(b_1,b_2,...)\in c_0$, i.e., $$\sum_{i=1}^{\infty}u_{ni}b_i\rightarrow \sum_{i=1}^{\infty}u_{i}b_i,$$
 
 Since $\{\alpha_{i}\phi_{i}(x)\}_{i\in\mathbb{N}}\in c_0$ for all $x\in[-1,1]$, it indicates that 
 
 $$S_x(u_n)=\sum_{i=1}^{\infty}u_{ni}\alpha_i\phi_i(x)\rightarrow \sum_{i=1}^{\infty}u_{i}\alpha_i\phi_i(x)=S_x(u).$$
 
 Therefore the operator $S_x$ is weak* continuous, and hence weak* lower semi-continuous. Moreover, $\Omega(u)=\|u\|_{l^1}$ is also weak* lower semi-continuous (\cite{Conway1990}). Further, by the Banach–Alaoglu theorem, the unit ball is weak* compact, satisfying $(B3)$. Therefore, all the assumptions in \ref{assume1}, except A2, are shown to be satisfied.\\
 
  \noindent We take $\alpha_i=i^{-r}, ~r>0$. Then the operator $A$ is given by
  \begin{equation}
  	Au=\sum_{i=1}^{\infty}i^{-r}u_i\phi_{i}
  \end{equation}
 We simulate the data points as follows. Let $u_{\rho}$ be an element of $l^1$. Consider the data points
 $z=\{(x_i,y_i)\}_{i=1}^{m}$ that follow the probability distribution $\rho(x,y)$.
 Our distribution $\rho(x,y)$ is given such that the marginal distribution $\rho_X(x)$ is the uniform distribution on $[-1,1]$ and the conditional distribution $\rho(y|x)$ is
 \begin{equation*}
 	\rho(y|x)=(Au_{\rho})(x)+ N(0,\sigma^2),
 \end{equation*}
 where $N(0,\sigma^2)$ is the Guassian distribution with mean $0$ and variance $\sigma^2$. Then,
 it follows from \cite{CuckerZhou2007} that for $p=2$ in \eqref{true} we have,
 \begin{equation*}
 	u_{\rho}=\underset{u\in 1^1}{\arg \min}\int_Z (y-(Au)(x))^2 d\rho(x,y).
 \end{equation*}
 Recall that the element $u_{\rho}^{\lambda}$ is defined as follows. For all $u\in l^1$

 \begin{equation*}
 	\mathcal{E_{\rho}}(u_{\rho}^{\lambda})-	\mathcal{E_{\rho}}(u_{\rho})+\lambda\|u_{\rho}^{\lambda}\|_{B_1}\leq \mathcal{E_{\rho}}(u)-	\mathcal{E_{\rho}}(u_{\rho})+\lambda\|u\|_{B_1}
 \end{equation*}
By \cite{CuckerZhou2007} we know that $\mathcal{E_{\rho}}(u)-	\mathcal{E_{\rho}}(u_{\rho})=	\|Au-Au_{\rho}\|_{L^2}^2$. Hence, from the above inequality we deduce that
\begin{equation*}
	u_{\rho}^{\lambda}=\underset{u\in l^1}{\arg\min}\left(	\|Au-Au_{\rho}\|_{L^2}^2+\lambda\|u\|_{B_1}\right)
\end{equation*}
The Legendre polynomials $\phi_i$ form an orthonormal basis for $L^2[-1,1]$. Therefore we have 
\begin{equation*}
	u_{\rho}^{\lambda}=\underset{u\in l^1}{\arg\min}\left(	\sum_{i=1}^{\infty}(u_{i}-u_{\rho,i})^2i^{-2r}+\lambda\sum_{i=1}^{\infty}|u_i|\right)
\end{equation*}
Since each term in the summation is positive and independent, each of them should minimize the corresponding coordinates independently. Hence $u_{\rho,i}^{\lambda}$ is the minimizer of $(u_{i}-u_{\rho,i})^2i^{-2r}+\lambda|u_i|$. This holds only if
\begin{equation}\label{u}
	u_{\rho,i}^{\lambda}=\text{sgn}(u_{\rho,i})\max\left(|u_{\rho,i}|-\frac{\lambda i^{2r}}{2},0\right)
\end{equation}
 Let's take $N_{\lambda}$ to be the largest integer $i$ such that $|u_{\rho,i}|-{(\lambda/2) i^{2r}}\geq0$. If we assume $u_{\rho}$ is such that $u_{\rho,i}\leq bi^{-lv},b>0,lv>1$, this implies that $N_{\lambda}\leq (\lambda/c)^{-\frac{1}{2r+lv}}$ for some constant $c>0$. Then from \eqref{u}, we get
 \begin{align*}
 	u_{\rho,i}^{\lambda}&=u_{\rho,i}-\text{sgn}(u_{\rho,i})\frac{\lambda i^{2r}}{2},~~~\text{when}~i\leq N_{\lambda}\\
 	u_{\rho,i}^{\lambda}&=0,\quad~~~~~~~~~~~~~~~~~~~~~~~~~~~~~\text{when}~i> N_{\lambda}
 \end{align*}
  We then have the following inequality  for all $i\leq N_{\lambda}$
  \begin{equation*}
  	|u_{\rho,i}^{\lambda}-u_{\rho,i}|=\frac{\lambda i^{2r}}{2}
  \end{equation*}
 Similarly, for $i>N_{\lambda}$,
 \begin{equation*}
 	|u_{\rho,i}^{\lambda}-u_{\rho,i}|=|u_{\rho,i}|
 \end{equation*}
The $l^1$ norm of $u_{\rho}^{\lambda}-u_{\rho}$ is given by
\begin{align*}
	\|u_{\rho}^{\lambda}-u_{\rho}\|_{l^1}=&\sum_{i=1}^{N_{\lambda}}|u_{\rho,i}^{\lambda}-u_{\rho,i}|+|u_{\rho,N_{\lambda}+1}^{\lambda}-u_{\rho,N_{\lambda}+1}|+\sum_{i=N_{\lambda}+2}^{\infty}|u_{\rho,i}^{\lambda}-u_{\rho,i}|\\
	=&\sum_{i=1}^{N_{\lambda}}\frac{\lambda i^{2r}}{2}+u_{\rho,N_{\lambda}+1}+\sum_{i=N_{\lambda}+2}^{\infty}u_{\rho,i}\\
	\leq&\sum_{i=1}^{N_{\lambda}}\frac{\lambda {N_{\lambda}}^{2r}}{2}+b({N_{\lambda}+1})^{-lv}+b\int_{{N_{\lambda}+1}}^{\infty}x^{-lv}dx
\end{align*}
Hence, the LHS satisfies the inequality
\begin{equation*}
		\|u_{\rho}^{\lambda}-u_{\rho}\|_{l^1}\leq \frac{\lambda {N_{\lambda}}^{2r+1}}{2}+b({N_{\lambda}+1})^{-lv}+\frac{{b(N_{\lambda}+1)}^{-lv+1}}{lv-1}
\end{equation*}
By the definition of $N_{\lambda}$, we had $N_{\lambda}\leq (\lambda/c)^{-\frac{1}{2r+lv}}$ and $N_{\lambda}+1\geq (\lambda/c)^{-\frac{1}{2r+lv}}$. We apply this inequality to the above expression. Since $-lv+1<0$, we end up with
\begin{equation*}
	\|u_{\rho}^{\lambda}-u_{\rho}\|_{l^1}\leq  k_1{\lambda}^{\frac{lv-1}{2r+lv}}+k_2\lambda^{\frac{lv}{2r+lv}}+k_3\lambda^{\frac{lv-1}{2r+lv}}
\end{equation*}
where $k_1,~k_2,~k_3$ are appropriate positive constants that represent the coefficients.

\noindent As we want the regularization parameter $\lambda$ to go to $0$, without loss of generality, assume that $0<\lambda<1$. Hence we have
\begin{equation*}
\|u_{\rho}^{\lambda}-u_{\rho}\|_{l^1}  \leq  k_1{\lambda}^{\frac{lv-1}{2r+lv}}+k_2\lambda^{\frac{lv-1}{2r+lv}}+k_3\lambda^{\frac{lv-1}{2r+lv}}=k{\lambda}^{\frac{lv-1}{2r+lv}}
\end{equation*}
Where $k=k_1+k_2+k_3$. Therefore assumption (A2) is satisfied with $\beta=\frac{lv-1}{2r+lv}$.\\

\noindent We take $u_{\rho,i}=\{i^{-2}\}_{i\in\mathbb{N}}$ and $\alpha_{i}=i^{-1}$. Further we consider $\sigma=0.1,0.05$ and $0.01$ for analysis purposes. In the given fig \ref{fig1}, the regularized solution $u_z^{\lambda}$ is plotted against $m=100,1000,10000$ against the noise level of $\sigma=0.1$.\\

\begin{figure}[H]
	\captionsetup{font=footnotesize}
	\centering
	\subfloat{\includegraphics[width
		= 4.5in, height=2.5in]{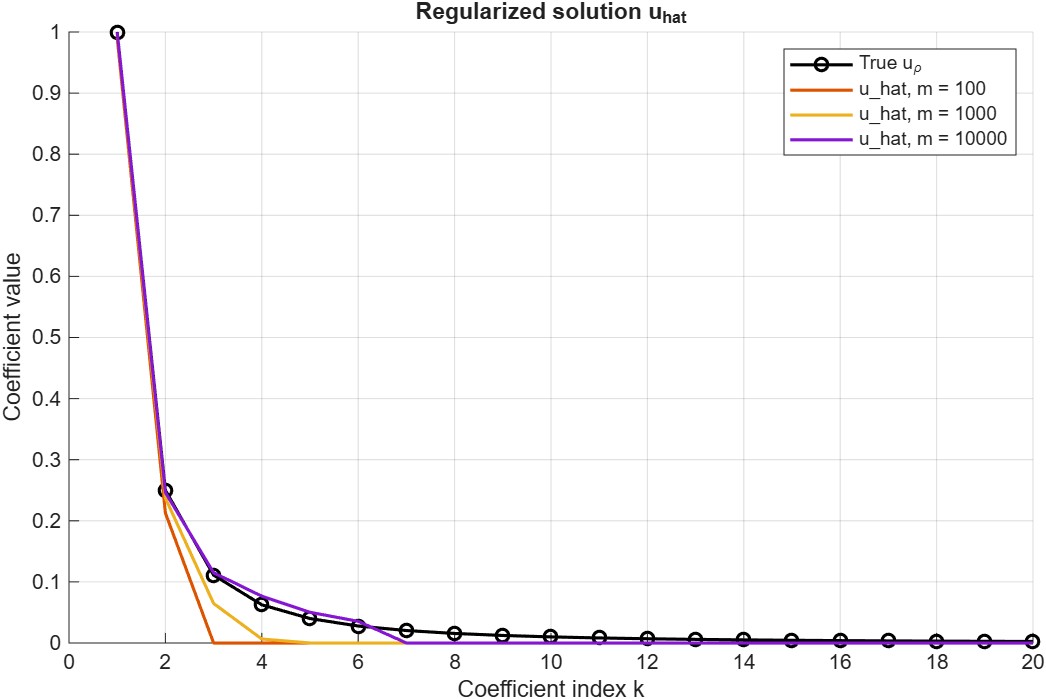}}
	\caption{Graph of $u_z^{\lambda}$ for $\sigma=0.1$ against $m=100,~1000,~10000$. } \label{fig1}
\end{figure} 

The given image below, fig \ref{fig2} depicts the regularized solution corresponding to $m=100,1000,10000$ when the noise level $\sigma=0.05$.
\begin{figure}[H]
	\captionsetup{font=footnotesize}
	\centering
	\subfloat{\includegraphics[width
		= 4.5in, height=2.5in]{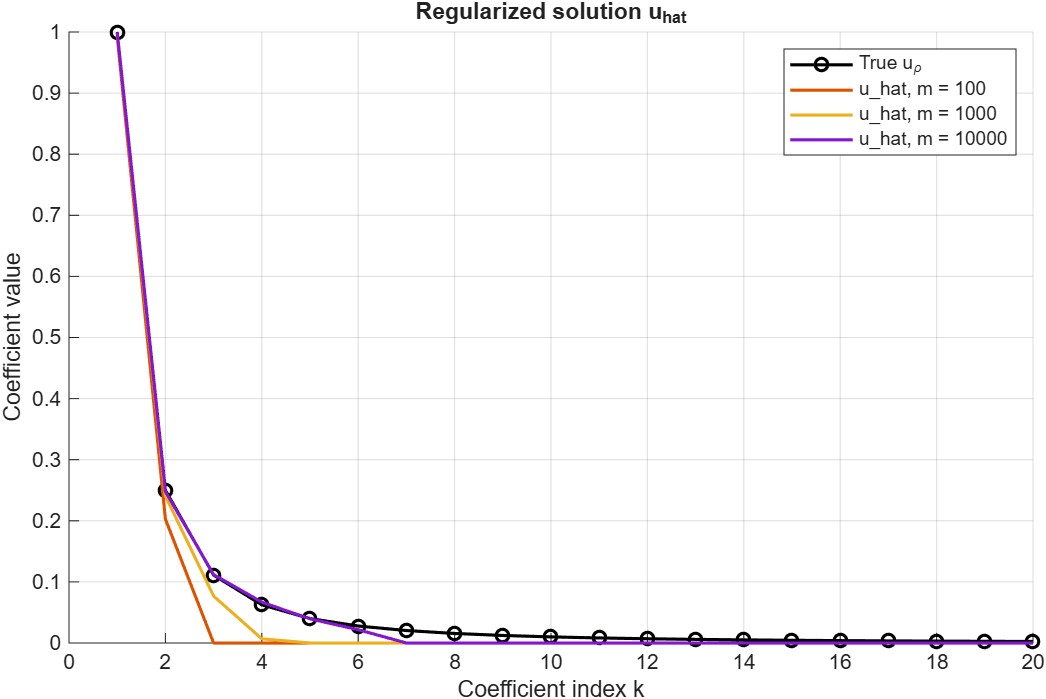}}
	\caption{Graph of $u_z^{\lambda}$ for $\sigma=0.05$ against $m=100,~1000,~10000$ } \label{fig2}
\end{figure}
The following fig \ref{fig3} shows the regularized solution corresponding to $m=100,1000,10000$ when the noise level $\sigma=0.01$.
\begin{figure}[H]
		\captionsetup{font=footnotesize}
\centering
\subfloat{\includegraphics[width
	= 4.5in, height=2.5in]{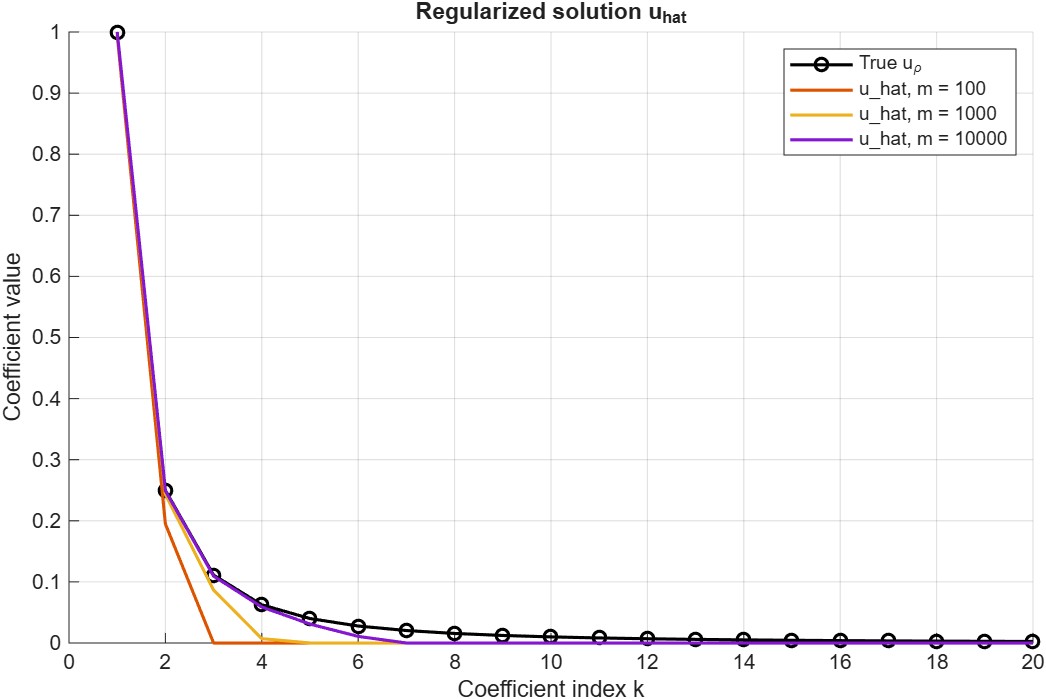}}
\caption{Graph of $u_z^{\lambda}$ for $\sigma=0.01$ against $m=100,~1000,~10000$ } \label{fig3}
\end{figure}
The Bregman distance corresponding to different noise levels and $m$ is tabulated as follows. The Bregman error can be seen converging to zero as the number of data points increases. We have not used corollary \ref{cor3} since the rate obtained is not sharp.
\begin{table}[H]
	\caption{Computational results for different error levels }
	\begin{center} 
		\begin{tabular}{|c|c|c|c|}
			\hline $\sigma$ & $m$  & Regul.Parameter, $\lambda=m^{-1}$
			& \multicolumn{1}{|c|}{Breman error}\\
			\hline
			& 100 & $10^{-2}$  & $0.28652$ \\
			\cline{2-4}
			$10\%$  & 1000 & $10^{-3}$  & $0.19968$ \\
			\cline{2-4}
			& 10000& $10^{-4}$  & $0.16825$\\
			\hline
			& 100 & $10^{-2}$  & $0.30196$ \\
			\cline{2-4}
			$5\%$  & 1000 & $10^{-3}$  & $0.19722$ \\
			\cline{2-4}
			& 10000& $10^{-4}$  & $0.14693$\\
			\hline
			& 100 & $10^{-2}$  & $0.3143$ \\
			\cline{2-4}
			$1\%$  & 1000 & $10^{-3}$  & $0.19525$ \\
			\cline{2-4}
			& 10000& $10^{-4}$  & $0.12988$\\
			\hline
		\end{tabular}\label{table1}
	\end{center}
\end{table}

\section{Conclusion and future work}\label{sec6}
In this paper we studied inverse learning problem in the Banach space setting without assuming reflexivity. We have achieved an upper rate of convergence for the Bregman distance in terms of the number of data points $m$. Furthermore, numerical simulations were
performed to demonstrate the implementability and effectiveness of
the proposed method. When $\Omega(u)=\|u\|_{B_1}$, we achieve an asymptotic rate of $m^{-\frac{\beta}{2(p+1+\beta)}}$. When our domain is a $q-$uniform convex Banach space, we get an improved rate of $m^{-\frac{\beta}{2(1+\beta(q-1))}}$ in terms of the norm convergence, compared to $m^{-\frac{\beta}{(2+s)(1+\beta(q-1))}},s>0$ from \cite{JOSEPH2026102050}. In particular, when $B_1$ is a Hilbert space, $q$ will be equal to $2$, and hence we get an upper rate of convergence of $m^{-\frac{\beta}{2(1+\beta)}}$.
 
 In \cite{BGM}, the optimal rate of convergence in the case of a Reproducing Kernel Hilbert space is of the order
\begin{equation*}
	\|L_v^l(u_z-u_{\rho})\|_{H_1}=O\left(\left(\frac{1}{m}\right)^{\frac{b(r+l)}{2br+b+1}}\right)
\end{equation*}
Here $u_z$ represents the regularized solution and $L_v$ is a self adjoint operator. The value $l$ belongs to $[0,1/2]$ and $b>1$ is a parameter corresponding to $\rho$ and $L_v$. The parameter $r$ appears in the source condition as $f_{\rho}=L_v^r(h)$, for some $h$ in the domain.

\noindent When $l=0$, the result is comparable to ours and provides a rate of
\begin{equation*}
	\|u_z-u_{\rho}\|_{H_1}=O\left(\left(\frac{1}{m}\right)^{\frac{br}{2br+b+1}}\right)
\end{equation*}
Our rate coincides with the optimal convergence rate for the special case when
\begin{equation}\label{comp}
	{\frac{\beta}{2(1+\beta)}}=\frac{br}{2br+b+1}
\end{equation}
As it can be seen, our result can be made optimal in the Hilbert setting for the cases when the powers coincide.\\
 We emphasize that the upper convergence rate established in the paper is not sharp in general, and hence does not imply an optimal rate. The issue of optimality on a larger class of domains is still open and is currently being investigated by us. We anticipate establishing improved, possibly optimal rates in a subsequent study. 
 
The proposed framework can be further generalized by
considering a non linear function $F:B_1\to V$ instead of a linear operator $A$. This allows us to consider more general problems arising in the inverse problem literature. Such generalized operators can be seen in \cite{zeidler1985nonlinear}, \cite{schuster2012regularization}.

\bibliographystyle{plain}
\bibliography{2ref}

\begin{thebibliography}{10}

\bibitem{doi:10.1137/24M1719785}
Giovanni~S. Alberti, Ernesto De~Vito, Tapio Helin, Matti Lassas, Luca Ratti,
  and Matteo Santacesaria.
\newblock Learning sparsity-promoting regularizers for linear inverse problems.
\newblock {\em SIAM Journal on Mathematics of Data Science}, 8(1):167--199,
  2026.

\bibitem{Alberti2026}
Giovanni~S. Alberti, Ernesto~De Vito, Tapio Helin, Matti Lassas, Luca Ratti,
  and Matteo Santacesaria.
\newblock Learning sparsity-promoting regularizers for linear inverse problems.
\newblock {\em SIAM Journal on Mathematics of Data Science}, 8(1):167--199,
  2026.

\bibitem{BAUER200752}
Frank Bauer, Sergei Pereverzev, and Lorenzo Rosasco.
\newblock On regularization algorithms in learning theory.
\newblock {\em Journal of Complexity}, 23(1):52--72, 2007.

\bibitem{BenesovaKruzik2017}
Barbora Bene{\v{s}}ov{\'a} and Martin Kru{\v{z}}{\'\i}k.
\newblock Weak lower semicontinuity of integral functionals and applications.
\newblock {\em SIAM Review}, 59(4):703--766, 2017.

\bibitem{BGM}
Gilles Blanchard and Nicole Mücke.
\newblock Optimal rates for regularization of statistical inverse learning
  problems.
\newblock {\em Foundations of Computational Mathematics}, 18, 04 2016.

\bibitem{Bregman1967}
L.~M. Bregman.
\newblock The relaxation method of finding the common point of convex sets and
  its application to the solution of problems in convex programming.
\newblock {\em USSR Computational Mathematics and Mathematical Physics},
  7(3):200--217, 1967.

\bibitem{Bubba2025O}
Tatiana~A. Bubba, Tommi Heikkil{\"a}, Demetrio Labate, and Luca Ratti.
\newblock Regularization with optimal space-time priors.
\newblock {\em SIAM Journal on Imaging Sciences}, 18(3):1563--1600, 2025.

\bibitem{Conway1990}
John~B. Conway.
\newblock {\em A Course in Functional Analysis}, volume~96 of {\em Graduate
  Texts in Mathematics}.
\newblock Springer-Verlag, New York, 2 edition, 1990.

\bibitem{CuckerZhou2007}
Felipe Cucker and Ding‐Xuan Zhou.
\newblock {\em Learning Theory: An Approximation Theory Viewpoint}.
\newblock Cambridge Monographs on Applied and Computational Mathematics, 24.
  Cambridge University Press, Cambridge, UK, 2007.

\bibitem{MR463994}
Ivar Ekeland and Roger Temam.
\newblock {\em Convex analysis and variational problems}, volume Vol. 1 of {\em
  Studies in Mathematics and its Applications}.
\newblock North-Holland Publishing Co., Amsterdam-Oxford; American Elsevier
  Publishing Co., Inc., New York, 1976.
\newblock Translated from the French.

\bibitem{engl2000regularization}
H.W. Engl, M.~Hanke, and A.~Neubauer.
\newblock {\em Regularization of Inverse Problems}.
\newblock Mathematics and Its Applications. Springer Netherlands, 2000.

\bibitem{Xu1991Inequalities}
K.~Xu H.\.
\newblock Inequalities in banach spaces with applications.
\newblock {\em Nonlinear Analysis}, 16(12):1127--1138, 1991.

\bibitem{Hanner1956}
Olof Hanner.
\newblock On the uniform convexity of $l^p$ and $\ell^p$.
\newblock {\em Arkiv för Matematik}, 1(5):237--244, 1956.

\bibitem{Hofmann2007}
Bernd Hofmann, Barbara Kaltenbacher, Christiane P{\"o}schl, and Otmar Scherzer.
\newblock A convergence rates result for tikhonov regularization in banach
  spaces with non-smooth operators.
\newblock {\em Inverse Problems}, 23(3):987--1010, 2007.

\bibitem{JOSEPH2026102050}
Darrel~K. Joseph and M.P. Rajan.
\newblock Convergence analysis of statistical inverse problems on reproducing
  kernel banach spaces.
\newblock {\em Journal of Complexity}, 95:102050, 2026.

\bibitem{Liu2017}
Huanxiang Liu, Baohuai Sheng, and Peixin Ye.
\newblock The improved learning rate for regularized regression with rkbss.
\newblock {\em International Journal of Machine Learning and Cybernetics},
  8(4):1235--1245, 2017.

\bibitem{Lorenz2008}
Dirk~A. Lorenz.
\newblock Convergence rates and source conditions for tikhonov regularization
  with sparsity constraints.
\newblock {\em Journal of Inverse and Ill-Posed Problems}, 16(5):463--478,
  2008.

\bibitem{nair2009linear}
M.T. Nair.
\newblock {\em Linear Operator Equations: Approximation and Regularization}.
\newblock World Scientific, 2009.

\bibitem{rockafellar1970convex}
R.~Tyrrell Rockafellar.
\newblock {\em Convex Analysis}.
\newblock Princeton University Press, Princeton, NJ, 1970.

\bibitem{schuster2012regularization}
T.~Schuster, B.~Kaltenbacher, B.~Hofmann, and K.S. Kazimierski.
\newblock {\em Regularization Methods in Banach Spaces}.
\newblock Radon Series on Computational and Applied Mathematics. De Gruyter,
  2012.

\bibitem{Learningrate}
Baohuai Sheng and Peixin Ye.
\newblock The learning rates of regularized regression based on reproducing
  kernel banach spaces.
\newblock {\em Abstract and Applied Analysis}, 2013(1):694181, 2013.

\bibitem{Showalter1997}
Ralph~E. Showalter.
\newblock {\em Monotone Operators in Banach Space and Nonlinear Partial
  Differential Equations}, volume~49 of {\em Mathematical Surveys and
  Monographs}.
\newblock American Mathematical Society, Providence, RI, 1997.

\bibitem{Tikhonov:1963}
A.~N. Tikhonov.
\newblock Solution of incorrectly formulated problems and the regularization
  method.
\newblock {\em Soviet Math. Dokl.}, 4(4):1035--1038, 1963.

\bibitem{JMLR:v22:20-751}
Rui Wang and Yuesheng Xu.
\newblock Representer theorems in banach spaces: Minimum norm interpolation,
  regularized learning and semi-discrete inverse problems.
\newblock {\em Journal of Machine Learning Research}, 22(225):1--65, 2021.

\bibitem{MR4749129}
Rui Wang, Yuesheng Xu, and Mingsong Yan.
\newblock Sparse representer theorems for learning in reproducing kernel
  {B}anach spaces.
\newblock {\em J. Mach. Learn. Res.}, 25:Paper No. [93], 45, 2024.

\bibitem{Xu2023SparseBanach}
Yuesheng Xu.
\newblock Sparse machine learning in banach spaces.
\newblock {\em Applied Numerical Mathematics}, 187:185--204, 2023.

\bibitem{XuRoach1991}
Z.~B. Xu and G.~F. Roach.
\newblock Characteristic inequalities of uniformly convex and uniformly smooth
  banach spaces.
\newblock {\em Journal of Mathematical Analysis and Applications},
  157(1):189--210, 1991.

\bibitem{zeidler1985nonlinear}
Eberhard Zeidler.
\newblock {\em Nonlinear Functional Analysis and its Applications, Volume III:
  Variational Methods and Optimization}.
\newblock Springer, New York, 1985.

\bibitem{5179093}
Haizhang Zhang, Yuesheng Xu, and Jun Zhang.
\newblock Reproducing kernel banach spaces for machine learning.
\newblock In {\em 2009 International Joint Conference on Neural Networks},
  pages 3520--3527, 2009.

\end{thebibliography}
\nocite{*}

\end{document}